\newcommand\footnoteref[1]{\protected@xdef\@thefnmark{\ref{#1}}\@footnotemark}
\definecolor{grey}{rgb}{0.95,0.95,0.95}
\definecolor{green}{rgb}{0.2,0.6,0.4}
\newcommand{\N}{\mathbb{N}}
\newcommand{\conc}{{{}^\smallfrown}}
\newcommand{\inter}{\mbox{int}}
\newcommand{\Psf}{\mathsf{P}}
\newcommand{\Qsf}{\mathsf{Q}}
\newcommand{\imp}{\rightarrow}
\newcommand{\Nb}{\mathbb{N}}
\newcommand{\Qb}{\mathbb{Q}}
\newcommand{\Ccal}{\mathcal{C}}
\newcommand{\Ncal}{\mathcal{N}}
\newcommand{\Qcal}{\mathcal{Q}}
\newcommand{\Rcal}{\mathcal{R}}
\newcommand{\Scal}{\mathcal{S}}
\newcommand{\uh}{{\upharpoonright}}
\renewcommand{\setminus}{\smallsetminus}
\newcommand{\tuple}[1]{\left\langle #1 \right\rangle}
\DeclareMathAlphabet{\mathbfsf}{\encodingdefault}{\sfdefault}{bx}{sl}
\newcommand{\s}[1]{\ensuremath{\sf{#1}}}
\newcommand{\is}[1]{\s{I}\Sigma^0_{#1}}
\DeclareMathOperator{\erp}{\s{(\aleph_0,\eta)}^2}
\DeclareMathOperator{\ers}{\s{(\eta)^1_{<\infty}}}
\DeclareMathOperator{\erps}{\s{(\aleph_0,\eta)^1}}
\DeclareMathOperator{\rca}{\s{RCA}_0}
\DeclareMathOperator{\aca}{\s{ACA}_0}
\DeclareMathOperator{\bst}{\s{B}\Sigma^0_2}
\DeclareMathOperator{\ist}{\s{I}\Sigma^0_2}
\DeclareMathOperator{\er}{\s{ER}}
\DeclareMathOperator{\rt}{\s{RT}}
\DeclareMathOperator{\tto}{\s{TT}}
\DeclareMathOperator{\Red}{\textup{\texttt{red}}}
\DeclareMathOperator{\blue}{\textup{\texttt{blue}}}
\definecolor{lightblue}{HTML}{e6e6e6}
\definecolor{lightred}{HTML}{eca6a6}
\definecolor{lightgreen}{RGB}{164,244,140}
\newtheoremstyle{custom}
  {10pt}
  {10pt}
  {\normalfont}
  {}
  {\bfseries}
  {}
  { }
  {}
\theoremstyle{custom}
\newtheorem{theorem}{Theorem}[section]
\newtheorem{lemma}[theorem]{Lemma}
\newtheorem{definition}[theorem]{Definition}
\newtheorem{remark}[theorem]{Remark}
\newtheorem{question}[theorem]{Question}
\begin{document}

\title[Coloring the rationals in reverse mathematics]
	{Coloring the rationals in reverse mathematics}

\author[Frittaion]{Emanuele Frittaion}
 \address{Mathematical Institute, Tohoku University, Japan}
\thanks{Emanuele Frittaion's research is supported by the Japan Society for the
Promotion of Science.}
\email{frittaion@math.tohoku.ac.jp}
\urladdr{http://www.math.tohoku.ac.jp/~frittaion/}

\author[Patey]{Ludovic Patey}
 \address{Laboratoire PPS, Paris Diderot University, France}
\thanks{Ludovic Patey is funded by the John Templeton Foundation (`Structure and 
Randomness in the Theory of Computation' project). The opinions expressed in this 
publication are those of the author(s) and do not necessarily reflect the views of the 
John Templeton Foundation.}
\email{ludovic.patey@computability.fr}
\urladdr{http://ludovicpatey.com/}

\begin{abstract}
Ramsey's theorem for pairs asserts that every $2$-coloring of the
pairs of integers has an infinite monochromatic subset. 
In this paper, we study a strengthening of Ramsey's theorem for pairs
due to Erd\H{o}s and Rado, which states that every
$2$-coloring of the pairs of rationals has either an infinite
0-homogeneous set or a 1-homogeneous set of order type $\eta$,
where~$\eta$ is the order type of the rationals.
This theorem is a natural candidate to lie strictly between the arithmetic comprehension axiom
and Ramsey's theorem for pairs.
This Erd\H{o}s-Rado theorem, like the tree theorem for pairs, 
belongs to a family of Ramsey-type statements whose logical strength
remains a challenge.
\end{abstract}

\maketitle

\section{Introduction}

In this paper, we investigate the reverse mathematics of a well-known theorem due to 
Erd\H{o}s and Rado about $2$-colorings of pairs of rationals. This theorem is a natural strengthening
of Ramsey's theorem for pairs and two colors. We say that an order type $\alpha$ is \emph{Ramsey}, and write~$\alpha \to (\alpha)^2_2$, if for every coloring $f\colon[L]^2\to 
2$, where $L$ is a linear order of order type $\alpha$, there is a homogeneous set $H$ 
such that $(H,\leq_L)$ has order type $\alpha$. Ramsey's theorem for pairs and two colors asserts that $\omega$ is Ramsey.
It turns out that~$\omega$  and~$\omega^{*}$ are the only countable Ramsey order types.  In particular, $\eta \to (\eta)^2_2$ does not hold, where $\eta$ is the order type of the 
rationals. A standard counterexample is as follows. Fix a 
one-to-one map 
$i\colon\Qb\to\Nb$. Define $f\colon[\Qb]^2\to 2$ by 
letting
\[   f(x,y)=\begin{cases}
             0   &   \text{if } x<_\Qb y \land i(x)<i(y); \\
             1 &     \text{if } x<_\Qb y\land i(x)>i(y).
            \end{cases} \]
A  homogeneous set of order type $\eta$ would give an embedding of $\Qb$ into $\omega$ (with color $0$) or 
$\omega^*$ (with color $1$), which is impossible. Even though Ramsey's theorem for 
rationals fails,  Erd\H{o}s and Rado \cite[Theorem 
4, p. 427]{Erdos1952Combinatorial} proved the following
Ramsey-type theorem (see also Rosenstein \cite[Theorem 11.7, p. 
207]{Rosenstein1982Linear}).

\begin{theorem}[Erd\H{o}s-Rado theorem]\label{ER52}
The partition relation $\eta\to(\aleph_0,\eta)^2$ holds.
\end{theorem}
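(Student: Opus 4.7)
The plan is to show that, assuming no infinite $0$-homogeneous set exists, a $1$-homogeneous set of order type $\eta$ can be built by back-and-forth. Enumerating $\Qb = \{r_n : n \in \Nb\}$, I maintain a partial order-preserving injection $\phi_n : \Qb \to \Qb$ whose image $F_n$ is a finite $1$-homogeneous subset of $\Qb$, extending both the domain and the codomain at each stage in the standard alternating fashion; at stage $n+1$ I place $\phi(r_n)$ into the gap of $F_n$ corresponding to the position of $r_n$ relative to the current domain.

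The key technical notion is \emph{perfectness}: a finite $1$-homogeneous set $F \subseteq \Qb$ is perfect if, for every gap $I$ of $F$ (maximal open subinterval of $\Qb \setminus F$) and every open subinterval $J \subseteq I$, there exists $x \in J$ such that $F \cup \{x\}$ is itself perfect. Formally, the family $\Pcal$ of perfect sets is the greatest fixed point of the corresponding monotone operator on families of finite $1$-homogeneous subsets of $\Qb$. If $\emptyset \in \Pcal$, the back-and-forth succeeds while maintaining the invariant $F_n \in \Pcal$, and the union $H = \bigcup_n F_n$ is a $1$-homogeneous set of order type $\eta$.

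The main step is therefore to show that $\emptyset \in \Pcal$. I argue contrapositively: if $\emptyset \notin \Pcal$, the failure is witnessed at some ordinal stage of the fixed-point iteration, stratifying the non-perfect sets by rank. I extract an infinite $0$-homogeneous set by recursion on rank, using the following pigeonhole for rational intervals: if an open subinterval $J \subseteq \Qb$ is covered by finitely many sets $J = S_1 \cup \cdots \cup S_k$, then some $S_i$ is dense in an open subinterval of $J$ (proved by iteratively passing to subintervals avoiding non-dense $S_i$). At rank one, a finite $1$-homogeneous $F$ fails perfectness because the set of $1$-extensions inside some subinterval is empty, and the pigeonhole then identifies some $q \in F$ that is $0$-connected to densely many elements in a sub-subinterval; higher ranks allow a recursive descent.

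The main obstacle I anticipate is this extraction argument. The recursion must simultaneously track the decreasing ordinal rank, a nested chain of open subintervals of $\Qb$, and the growing $0$-homogeneous prefix, ensuring at each step that $0$-connection to all previously chosen elements remains dense in the refined subinterval so that a new element extending the prefix can always be chosen. Managing this interplay while preserving well-foundedness of the rank recursion is the most delicate part of the argument.
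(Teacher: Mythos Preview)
Your plan has a structural problem in the step you yourself flag as most delicate. You propose a single recursion that simultaneously carries a strictly decreasing ordinal rank and a growing $0$-homogeneous prefix that must become infinite. Those two are incompatible: a strictly decreasing sequence of ordinals is finite, so after boundedly many steps the rank hits $0$ and the recursion halts, having produced only finitely many elements. If instead you intend to restart the rank after each new element is harvested, you must say from which finite $1$-homogeneous set the new rank is computed and why that rank is still defined (i.e.\ why the relevant set is still outside $\Pcal$) after you have committed to the new element and shrunk the interval; nothing in the sketch supplies this. More basically, the rank is attached to finite \emph{$1$-homogeneous} sets, while the object you are building is \emph{$0$-homogeneous}; the only point of contact you describe is the rank-$1$ case, where pigeonhole gives a single $q\in F$ with $\Red(q)$ dense in a subinterval. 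That gives you one element, not a mechanism for producing the next one while preserving the invariants.

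The paper's argument (see the proof that $\erp$ holds in $\aca$) sidesteps all of this with a direct dichotomy. Call $A\subseteq\Qb$ \emph{red-admissible} if some $x\in A$ has $A\cap\Red(x)$ somewhere dense. If every somewhere-dense set is red-admissible, build the infinite $0$-homogeneous set greedily: at stage $n$ the set $\bigcap_{i<n}\Red(x_i)$ is somewhere dense, hence red-admissible, so pick $x_n$ in it with $\bigcap_{i\le n}\Red(x_i)$ still somewhere dense. Otherwise some somewhere-dense $A$ has $A\cap\Red(x)$ nowhere dense for every $x\in A$; since finite unions of nowhere-dense sets are nowhere dense (your pigeonhole lemma, essentially Lemma~\ref{small}), one can pick a dense $1$-homogeneous subset of $A$ by always avoiding the small set $\bigcup_{i<n}\Red(x_i)$ inside the target subinterval. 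No ordinals, no greatest fixed points, and the recursion that builds the $0$-homogeneous set runs on $\omega$, not on a rank. Your back-and-forth construction of the dense $1$-homogeneous set is fine and matches Case~II of this argument; the trouble is entirely in your proposed replacement for Case~I.
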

The relation $\eta\to(\aleph_0,\eta)^2$ asserts that for every coloring $f\colon[L]^2 
\to 2$, where $L$ is a linear order of order type~$\eta$, there is either an infinite  
0-homogeneous set or a 1-homogeneous set $H$ such that $(H,\leq_\Qb)$ has  order type~$\eta$.\smallskip 

We study Theorem \ref{ER52} within the framework of reverse mathematics (see Simpson 
\cite{Simpson2009Subsystems}). Reverse mathematics is a vast mathematical program whose 
goal is to study the logical strength of ordinary theorems in terms of set existence 
axioms. It uses the framework of subsystems of second-order arithmetic,
with the base theory $\rca$ (recursive comprehension axiom).
$\rca$ is composed of $P^-$, that is, the basic first-order Peano axioms for 
$0,1,+,\times,<$, together with $\Delta^0_1$-comprehension and $\Sigma^0_1$-induction 
with number and set parameters.
$\rca$ is usually thought of as capturing \emph{computable mathematics}.
It turns out that the large majority of countable mathematics can be
proven in~$\aca$, where~$\aca$ is $\rca$ together with arithmetic comprehension.
See Hirschfeldt~\cite{Hirschfeldt2014Slicing} for a gentle presentation of the reverse 
mathematics below~$\aca$.\smallskip

We formalize Theorem \ref{ER52} in~$\rca$ as follows.
\begin{itemize}
 \item[$\erp$] For every coloring $f\colon[\Qb]^2\to 2$ there exists either an infinite $0$-homogeneous set or a $1$-homogeneous set $H$ such that $(H,\leq_\Qb)$ is dense.
\end{itemize}
Here $\Qb$ is any fixed primitive recursive presentation of the rationals. We may safely assume 
that the domain of $\Qb$ is $\Nb$.  Note that provably in $\rca$ every two (countable) linear orders of order type $\eta$ are isomorphic and any dense linear order obvioulsy contains a linear order of order type $\eta$. Therefore  $\erp$ is provably equivalent over $\rca$ to the statement of Theorem \ref{ER52}.

In order to study  $\erp$ we also consider a 
version of the infinite pigeonhole principle over the rationals, namely the statement:
\begin{itemize}
 \item[$\ers$] For every $n$ and for 
every  $n$-coloring $f\colon\Qb\to n$ there exists a dense homogeneous set.
\end{itemize}
\smallskip

The early study of reverse mathematics has led to 
the observation
that most of the theorems happen to be equivalent to five
main subsystems of second-order arithmetic that 
Montalb\'{a}n~\cite{Montalban2011Open} called the ``Big Five''.
However, Ramsey's theory provides many statements escaping  
this observation.
Perhaps the most well-known example is Ramsey's theorem for pairs and two colors ($\rt^2_2$). The 
effective analysis of Ramsey's theorem was started by 
Jockusch~\cite{Jockusch1972Ramseys}. In the framework of reverse mathematics, Simpson 
(see \cite{Simpson2009Subsystems}), building on Jockusch results, proved that 
whenever~$n \geq 3$ and $k\geq 2$, $\rt^n_k$ is equivalent to~$\aca$ 
over~$\rca$. The case of~$\rt^2_2$ had been a long-standing open problem
until Seetapun~\cite{Seetapun1995strength} proved that~$\rt^2_2$ is strictly weaker 
than~$\aca$ over~$\rca$. Cholak, Jockusch and Slaman \cite{Cholak2001strength} paved the 
way to the reverse mathematics analysis of Ramsey's theorem for pairs.  Since then,
many consequences of Ramsey's theorem for pairs have been studied,
leading to a whole zoo of independent statements. However, no natural statement besides Ramsey's theorem for pairs ($\rt^2$) is known to be strictly between $\aca$ and~$\rt^2_2$ over~$\rca$. The only known candidate is the 
tree theorem for pairs ($\tto^2_2$) studied in~\cite{Chubb2009Reverse,Corduan2010Reverse, 
Dzhafarov2010Ramseys, Patey2015strength}. We show that  $\erp$ also lies between $\aca$ 
and $\rt^2_2$, and so represents another candidate, arguably more natural than 
$\tto^2_2$.

Although no relation is known between them,
$\tto^2_2$ and $\erp$ share some essential
combinatorial features and put the emphasis on a new family of Ramsey-type theorems,
characterized by what we call a \emph{disjoint extension commitment}. See section~\ref{sect:questions}
for a discussion on this notion. Some separations known for variants of $\tto^2_2$
are essentially due to this common feature, which enables us to prove the same separations
for variants of $\erp$. In particular,
we prove that $\erp$  does not computably reduce to Ramsey's theorem 
for pairs \emph{with an arbitrary number of colors} ($\rt^2$). However, we 
cannot simply adapt this ``one-step separation'' to a separation over $\omega$-models, 
and in particular over~$\rca$, as in the case of  $\tto^2_2$ 
\cite{Patey2015strength}. This is the first known example of such an inability.
Indeed, a diagonalization against an $\rt^2_4$-instance is similar
to a diagonalization against two $\rt^2_2$-instances. Therefore, diagonalizing against~$\rt^2$
has some common flavor with a separation over standard models. 

Among the consequences of Ramsey's theorem for pairs, Ramsey's theorem for 
singletons ($\rt^1$), also known as the infinite pigeonhole principle, is of 
particular interest. $\rt^1$
 happens to be equivalent to the $\Sigma^0_2$ bounding
scheme (see Hirst~\cite{Hirst1987Combinatorics}).
The $\Sigma^0_2$ bounding scheme ($\bst$) is formally defined as
\[
(\forall x<a)\exists y\varphi(x,y,a)\implies \exists b(\forall 
x<a)(\exists y<b)\varphi(x,y,n)
\]
where $\varphi$ is any $\Sigma^0_2$ formula. 
One may think of $\bst$ as asserting that the finite
union of finite sets is finite (see for instance \cite{FriMar12}). We 
show that $\ers$, the corresponding pigeonhole principle for rationals, is 
strictly stronger than $\bst$, and hence has the same reverse mathematics 
status as the tree theorem for singletons ($\tto^1$) \cite{Corduan2010Reverse}.
\smallskip

For the purpose of separating  $\erp$ from 
$\rt^2$ over computable reducibility, we also 
introduce the asymmetric version of $\ers$ for two colors, namely $\erps$, 
stating that for every partition $A_0\cup A_1=\Qb$ of the rationals there exists either 
an infinite subset of $A_0$ or a dense subset of $A_1$. Indeed, we show the existence 
of a $\Delta^0_2$-instance of $\erps$, and hence of a computable instance of 
$\erp$, which does not reduce to any computable instance of $\rt^2$.

\subsection{Definitions and notation}

\emph{String}.
A \emph{string} is an ordered tuple of bits $b_0, \dots, b_{n-1}$, that 
is, such that~$b_i < 2$ for every~$i < n$.
The empty string is written $\langle\rangle$. A \emph{real}
is an infinite listing of bits $b_0, b_1, \dots$.
Given $s \in \omega$, $2^s$ is the set of strings of length $s$ and
$2^{<s}$ is the set of strings of length $<s$. Similarly,
$2^{<\omega}$ is the set of finite strings and $2^{\omega}$ is the set of reals.
Given a string $\sigma \in 2^{<\omega}$, we denote by $|\sigma|$ its length.
Given two strings $\sigma, \tau \in 2^{<\omega}$, we write $\sigma\conc\tau$ for the 
concatenation of $\sigma$ and $\tau$, and we say that $\sigma$ is a \emph{prefix}
of $\tau$ (written $\sigma \preceq \tau$) if there exists a string $\rho \in 2^{<\omega}$
such that $\sigma\conc\rho = \tau$.  Given a real $X$, we write $\sigma \prec X$ if
$\sigma = X \uh n$ for some $n \in \omega$, where $X \uh n$ denotes the restriction of $X$ to its first $n$ elements.
We may identify a real with a set of integers by considering that the real is its characteristic function.

\emph{Tree, path}.
A \emph{binary tree} $T \subseteq 2^{<\omega}$ is a set downward-closed under the prefix relation.
A real $P$ is a \emph{path} though~$T$ if for every $\sigma \prec P$, $\sigma \in T$.

\emph{Sets, partitions}.
Given two sets $A$ and $B$, we denote by $A < B$ the formula
$(\forall x \in A)(\forall y \in B)[x < y]$
and by $A \subseteq^{*} B$ the formula $(\forall^{\infty} x \in A)[x \in B]$,
meaning that $A$ is included in $B$ \emph{up to finitely many elements}.
Given a set~$X$ and some integer~$k$, a~\emph{$k$-partition of~$X$}
is a $k$-uple of pairwise disjoint sets $A_0, \dots, A_{k-1}$ such that~$A_0 \cup \dots \cup A_{k-1} = X$.
A \emph{Mathias condition} is a pair $(F, X)$
where $F$ is a finite set, $X$ is an infinite set
and $F < X$.
A condition $(F_1, X_1)$ \emph{extends } $(F, X)$ (written $(F_1, X_1) \leq (F, X)$)
if $F \subseteq F_1$, $X_1 \subseteq X$ and $F_1 \setminus F \subset X$.
A set $G$ \emph{satisfies} a Mathias condition $(F, X)$
if $F \subset G$ and $G \setminus F \subseteq X$.

\section{The Erd\H{o}s-Rado theorem in reverse mathematics}\label{sect:partitions-reducibility}

We start off the analysis of the Erd\H{o}s-Rado theorem by proving that the 
statement $\erp$ lies between $\aca$ and~$\rt^2_2$. On the lower bound hand,
$\erp$ can be seen as an immediate strengthening
of $\rt^2_2$. The upper bound is an effectivization of the original proof
of~$\erp$ by Erd\H{o}s and Rado in~\cite{Erdos1952Combinatorial}. 

\begin{lemma}[$\rca$]
$\erp\rightarrow\rt^2_2$. 
\end{lemma}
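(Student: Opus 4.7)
The plan is to observe that $\erp$ is essentially a strengthening of $\rt^2_2$, so we just need to transfer a $\rt^2_2$-instance to an $\erp$-instance by exploiting the fact that the fixed presentation $\Qb$ has domain $\Nb$. Given a coloring $f\colon[\Nb]^2 \to 2$, I would simply reinterpret the very same function as a coloring $g\colon[\Qb]^2 \to 2$, since unordered pairs from $\Qb$ are, on the level of underlying elements, unordered pairs of natural numbers.

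Then I would apply $\erp$ to $g$ and split into cases. If $\erp$ returns an infinite $0$-homogeneous set~$H$, then $H$ is, as a subset of $\Nb$, infinite and $0$-homogeneous for $f$. If instead $\erp$ returns a $1$-homogeneous set $H$ such that $(H, \leq_\Qb)$ is dense, then $H$ is in particular infinite (any dense linear order is infinite, provably in $\rca$), and as a subset of $\Nb$ it is $1$-homogeneous for $f$. In either case we obtain an infinite homogeneous set for $f$, proving $\rt^2_2$.

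Everything here is easily formalizable in $\rca$: the identification of pairs from $\Qb$ with pairs from $\Nb$ is purely a matter of the coded domain, the case split is $\Delta^0_1$ in the witness set, and the conclusion "$H$ is infinite" follows from the definition of density by $\Sigma^0_1$-induction available in $\rca$. There is no real obstacle here; the lemma is a one-line consequence of the fact that the chosen presentation of $\Qb$ uses~$\Nb$ as its underlying set, together with the trivial observation that a dense linear order is infinite.
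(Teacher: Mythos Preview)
Your proposal is correct and follows essentially the same approach as the paper: regard an $\rt^2_2$-instance as an $\erp$-instance via the identification of the domain of $\Qb$ with $\Nb$, and use that a dense set is infinite (provably in $\rca$) to handle the second case. The paper's proof is just a two-line version of what you wrote.
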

\begin{proof}
An instance of $\rt^2_2$ can be regarded as an instance of $\erp$. Moreover, 
provably in $\rca$, a dense set is infinite.
\end{proof}

The rest of this section is devoted to show that $\erp$ is provable in $\aca$. For this 
purpose, we give the following definition.

\begin{definition}[$\rca$]
By \emph{interval} we mean a set of the form $I=(x,y)_\Qb$ for $x,y\in \Qb$. We say that
$A\subseteq\Qb$ is \emph{somewhere dense} if $A$ is dense in some
interval of $\Qb$,  i.e.,\ there exists an interval $I$ such that for all intervals 
$J\subseteq I$ we have that $A\cap J\neq\emptyset$. We call $A$ \emph{nowhere 
dense}  otherwise.
\end{definition}
Notice that the above notion of nowhere dense is the usual topological notion with respect to
the order topology of $\Qb$. In general, the nowhere dense sets of a topological 
space form an ideal. This is crucial in the proof by Erd\H{o}s and Rado. For this reason, 
we also use the terminology \emph{positive} and \emph{small} for somewhere dense and 
nowhere dense respectively. In $\rca$ we can show that nowhere dense 
subsets of $\Qb$ are small, meaning that:
\begin{enumerate}[\quad $(1)$]
 \item If $A\subseteq\Qb$ is small and $B\subseteq A$, then $B$ is small;
 \item If $A,B\subseteq\Qb$ are small, then $A\cup B$ is small.
\end{enumerate}

With enough induction, it is possible to generalize $(2)$ to finitely 
many sets. 

\begin{lemma}[$\rca+\ist$]\label{small}
If $A_i$ is a small subset of $\Qb$ for all $i<n$, then $\bigcup_{i<n}A_i$ 
is small. 
\end{lemma}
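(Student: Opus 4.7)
The plan is to proceed by induction on $n$, formalizing the natural intuition that if we can dodge each $A_i$ by shrinking to a subinterval, then iterating $n$ times yields a subinterval that dodges the whole union. Fix small sets $A_0,\dots,A_{n-1}$ and an arbitrary interval $I \subseteq \Qb$; the goal is to produce a subinterval $J \subseteq I$ with $J \cap \bigcup_{i<n} A_i = \emptyset$.

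The key step is to prove, by induction on $k \le n$, the auxiliary statement
\[
\psi(k) \;\equiv\; \exists J_0,\dots,J_{k-1}\,\bigl( J_0 \subseteq I \andd \bigwedge_{i<k-1} J_{i+1} \subseteq J_i \andd \bigwedge_{i<k} J_i \cap A_i = \emptyset \bigr),
\]
where the $J_i$ range over intervals of $\Qb$. Since $I$ and the $A_i$'s are parameters, the disjointness conditions $J_i \cap A_i = \emptyset$ are $\Pi^0_1$, and the finite sequence $(J_0,\dots,J_{k-1})$ is coded by a single number, so $\psi(k)$ is a $\Sigma^0_2$ formula. Thus $\ist$ justifies the induction.

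The base case $\psi(0)$ is trivial (take the empty sequence). For the inductive step, assume $\psi(k)$ with witnesses $J_0,\dots,J_{k-1}$. Using the hypothesis that $A_k$ is small, applied to the interval $J_{k-1}$ (or to $I$ if $k=0$), pick a subinterval $J_k$ of $J_{k-1}$ with $J_k \cap A_k = \emptyset$; this yields $\psi(k+1)$. Instantiating at $k = n$ gives an interval $J_{n-1} \subseteq I$ disjoint from every $A_i$, hence from their union. Since $I$ was arbitrary, $\bigcup_{i<n} A_i$ is small.

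The only delicate point is the complexity bookkeeping: the statement ``$\bigcup_{i<n} A_i$ is small'' is itself $\Pi^0_3$, so one cannot directly induct on it under $\ist$. The trick is to move the outer universal quantifier over intervals $I$ outside the induction — with $I$ fixed as a parameter, the remaining statement $\psi(k)$ drops to $\Sigma^0_2$. This is the only place where induction strength matters, since the single-step closure under binary unions from clause $(2)$ needs no induction at all.
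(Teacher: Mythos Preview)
Your proof is correct and follows essentially the same approach as the paper: fix the interval $I$ as a parameter to drop the inductive statement to $\Sigma^0_2$, then shrink through the $A_i$'s one at a time. The only cosmetic difference is that the paper tracks a single interval $J$ disjoint from the partial union $A^k = \bigcup_{i<k} A_i$ rather than the whole nested sequence $J_0 \supseteq \dots \supseteq J_{k-1}$, but the content is identical.
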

\begin{proof}
Suppose that $A_i$ is small for every $i<n$. Fix an interval $I$. We aim to show that 
$A^n=\bigcup_{i<n}A_i$ is not dense in $I$. By $\Sigma^0_2$-induction we prove that for all $i\leq n$ there exists an interval $J\subseteq I$ such that $A^i\cap J=\emptyset$, where $A^i=\bigcup_{j<i}A_j$. For $i=n$ we have the desired conclusion. The case $i=0$ is trivial. Suppose $i+1\leq n$. By induction there exists an interval $J\subseteq I$ such that $A^i\cap J=\emptyset$.  By the assumption $A_i$ is small and so there exists an interval $K\subseteq J$ such that $A_i\cap K=\emptyset$. It follows that  $A^{i+1}\cap K=(A^i\cup A_i)\cap K=\emptyset$.
\end{proof}

\begin{theorem}\label{upper}
$\erp$ is provable in $\aca$.
\end{theorem}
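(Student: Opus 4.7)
The plan is to effectivize the classical Erd\H{o}s-Rado argument within~$\aca$. The combinatorial engine is Lemma~\ref{small} (which is available since $\aca$ proves $\ist$): the family of nowhere dense subsets of~$\Qb$ is closed under finite unions. All density predicates I will use are arithmetic (``nowhere dense'' is $\Pi^0_2$ and ``somewhere dense'' is $\Sigma^0_3$), so arithmetic comprehension is enough to form the sets demanded by the case analysis.

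Given $f\colon[\Qb]^2\to 2$, I form the set $B=\{x\in\Qb : \{y>x : f(x,y)=1\}\text{ is nowhere dense}\}$ and split on whether $B$ is somewhere dense. In the first case, $B$ is dense in some interval~$I_0$, and I build an infinite $0$-homogeneous set inside $I_0$ by recursion. Suppose at stage~$n$ I have $x_n\in B$ and an interval $I_n\subseteq I_0$ in which $B$ remains dense. Since $\{y>x_n : f(x_n,y)=1\}$ is nowhere dense, there is a subinterval $I_{n+1}\subseteq I_n\cap(x_n,\infty)$ on which $f(x_n,\cdot)$ is identically~$0$. Density of $B$ in $I_n$ passes to $I_{n+1}$, so I pick $x_{n+1}\in B\cap I_{n+1}$; the nesting $I_j\subseteq I_{i+1}\subseteq\{y : f(x_i,y)=0\}$ for $i<j$ then gives $f(x_i,x_j)=0$ throughout.

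In the second case $B$ is nowhere dense, so its complement contains a subinterval of every interval, and I build a $1$-homogeneous set of order type $\eta$ by stages. I maintain a finite $1$-homogeneous set $H_n$ together with a subinterval $J_G\subseteq G$ of ``extension candidates'' for each gap~$G$ of~$H_n$, with the invariant that adding any element of $J_G$ to~$H_n$ preserves $1$-homogeneity and keeps the construction alive. Here Lemma~\ref{small} plays its essential role: the union of the obstructions imposed by the finitely many previously added points is still nowhere dense, so inside every subinterval of every gap I can still find a subinterval of addable candidates. Density of $H=\bigcup_n H_n$ is enforced by a standard bookkeeping that addresses each interval of $\Qb$ with rational endpoints inside the ambient interval in turn.

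The main difficulty is in the second case: for the invariant to propagate on both sides of a newly added point~$y$, I need $\{z : f(y,z)=0\}$ to be nowhere dense, a property not directly controlled by $y\notin B$, which only concerns the direction above~$y$. To cope with this I plan to refine the dichotomy by simultaneously considering the symmetric set $B^{-}=\{x : \{y<x : f(x,y)=1\}\text{ is nowhere dense}\}$ and the ``two-sided good'' set $VG=\{x : \{z : f(x,z)=0\}\text{ is nowhere dense}\}$; a second application of Lemma~\ref{small} shows that unless $VG$ is somewhere dense, either $B$ or $B^{-}$ is, and then a one-sided variant of the Case~1 construction extracts an infinite $0$-homogeneous set. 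All predicates involved remain arithmetic, so $\aca$ continues to suffice.
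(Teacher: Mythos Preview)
Your case analysis has a genuine gap. The claim that Lemma~\ref{small} yields ``unless $VG$ is somewhere dense, either $B$ or $B^{-}$ is'' would require $VG\cup B\cup B^{-}$ to be positive for every coloring, and this is false. For a concrete counterexample, pick a doubly unbounded $<_\Qb$-increasing sequence $(a_n)_{n\in\mathbb{Z}}$ and let $f(x,y)=0$ exactly when $x$ and $y$ lie in blocks $(a_m,a_{m+1})$ of the same parity. Then for every $x$ both $\Red(x)$ and $\blue(x)$ contain full open intervals on each side of $x$, so $VG=B=B^{-}=\emptyset$; your trichotomy is silent on this coloring even though any single block is an infinite $0$-homogeneous set. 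The underlying error is that $x\notin B\cup B^{-}$ only says the one-sided blue sets are somewhere dense, while $x\in VG$ says the two-sided red set is nowhere dense, and there is no implication between these since $\Red(x)$ and $\blue(x)$ can both be somewhere dense.

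The paper sidesteps this by taking a second-order dichotomy rather than a split on the density of fixed pointwise-defined sets: either every positive $A\subseteq\Qb$ is \emph{red-admissible} (some $x\in A$ has $A\cap\Red(x)$ positive), or some positive $A$ fails this. The decisive feature of Case~II is that the witness $A$ satisfies the \emph{relativized} smallness condition ``$A\cap\Red(x)$ is nowhere dense for all $x\in A$'', which is exactly what Lemma~\ref{small} needs to keep the finite union of obstructions small while building a dense $1$-homogeneous subset of~$A$. Your set $VG$ encodes the strictly stronger absolute condition ``$\Red(x)$ is nowhere dense'', which need not hold anywhere. Your Case~1 argument is correct as written; to complete the proof you should abandon the $B/B^{-}/VG$ split and either argue as the paper does, or equivalently run the greedy search for $x_n$ with $A_{n+1}=A_n\cap\Red(x_n)$ positive: if it never stalls you get the infinite $0$-homogeneous set, and if it stalls at some positive non-admissible $A_n$, that very $A_n$ is the set your Case~2 machinery actually needs.
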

\begin{proof}
Let $f\colon [\Qb]^2\to 2$ be given. For any $x\in \Qb$, let 
$\Red(x)=\{y\in\Qb\setminus\{x\}\colon f(x,y)=0\}$. Define 
$\blue(x)$ accordingly. We say that $A\subseteq \Qb$ is
\emph{red-admissible} if there exists some $x\in A$ such that $A\cap\Red(x)$ is 
positive.

Case I. Every positive subset of $\Qb$ is red-admissible. We aim to show that 
there exists an infinite $0$-homogeneous set. We define by arithmetical recursion a 
sequence $(x_n)_{n\in\Nb}$ as 
follows.
Supppose we have defined $x_i$ for all $i<n$, and assume by arithmetical 
induction that $A_n=\bigcap_{i<n}\Red(x_i)$ is positive, and hence red-admissible (where $\bigcap_{i < 0} \Red(x_i) = \Qb$).  
Search for the $\omega$-least $x_n\in A_n$ such that 
$A_n\cap\Red(x_n)=\bigcap_{i<{n+1}}\Red(x_i)$ 
is positive. By definition, the set $\{x_n\colon n\in\Nb\}$ is infinite and
$0$-homogeneous.

Case II. There is a positive subset $A$ of $\Qb$ which is not 
red-admissible. In this case, we show that there exists a dense 
$1$-homogeneous set. Let $I$ be a witness of $A$ being positive. Fix an 
enumeration $(I_n)_{n\in\Nb}$ of all subintervals of $I$. Notice that by definition $A$ 
intersects every $I_n$.

We define by arithmetical recursion a sequence $(x_n)_{n\in\Nb}$ as follows. Let 
$x_0\in A\cap I_0$. Suppose we have defined $x_i\in A\cap I_i$ for all
$i<n$. By Lemma \ref{small}, since every $A\cap\Red(x_i)$ with $i<n$ is small, 
it follows that $E=\bigcup_{i<n} \big(A\cap\Red(x_i)\big)$ is small.
Let $J\subseteq I_n$ be such that $E\cap J=\emptyset$. We may safely assume that no 
$x_i$ with $i<n$ belongs to $J$. Since $A$ is dense in $I$ and $J\subseteq I$, we can 
find $x_n\in A\cap J$. In particular, $x_n\in
\bigcap_{i<n}\blue(x_i)$. Therefore $\{x_n\colon n\in\N\}$ is  dense and
$1$-homogeneous.
\end{proof}

\begin{remark}
A similar proof shows that $\rt^2_2$ is provable in $\aca$. In fact, we can consider the 
ideal of finite sets of $\Nb$ so that a positive set is just an infinite set and a 
red-admissible set is a set $A\subseteq\N$ such that $A\cap\Red(x)$ is infinite 
for some $x\in A$. 
\end{remark}

\section{Pigeonhole principle on $\Qb$}\label{sect:pigeonhole}

We next consider the statement $\ers$ asserting that every finite coloring of 
rationals has a dense homogeneous set.  The main result is that $\ers$ is stronger than $\bst$ over $\rca$. We 
achieve this by adapting the model-theoretic proof of Corduan, 
Groszek, and Mileti \cite{Corduan2010Reverse} that separates  $\tto^1$ from $\bst$.
Basically, in a model of 
$\rca+\neg\ist$, there are a real $X$ and an $X$-recursive instance of $\ers$ with no 
$X$-recursive solutions. Before going into the details of this proof, we establish the 
following simple reverse mathematics facts.

\begin{lemma}
Over $\rca$,
\begin{enumerate}[\quad $1)$]
\item $\erp\lor\ist\rightarrow\ers$
\item $\ers\rightarrow\rt^1$.
\end{enumerate}
\end{lemma}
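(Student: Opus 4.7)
Item (1) decomposes into proving $\erp \to \ers$ and $\ist \to \ers$ separately over $\rca$. For $\erp \to \ers$, given an $n$-coloring $f \colon \Qb \to n$, I would pass to the induced $2$-coloring of pairs $g \colon [\Qb]^2 \to 2$ defined by $g(x,y) = 0$ if $f(x) \neq f(y)$ and $g(x,y) = 1$ otherwise, and then apply $\erp$. An infinite $0$-homogeneous set for $g$ would consist of elements with pairwise distinct $f$-values, which is impossible once the set has more than $n$ members by the finite pigeonhole principle (provable in $\rca$). Hence $\erp$ must deliver a dense $1$-homogeneous set $H$, on which $f$ is constant, and $H$ is the required dense homogeneous set for $\ers$.

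For $\ist \to \ers$, given $f \colon \Qb \to n$, I would set $A_i = f^{-1}(i)$ for $i < n$, so that $\bigcup_{i < n} A_i = \Qb$ is somewhere dense. By the contrapositive of Lemma \ref{small} (which is proved precisely in $\rca + \ist$), some $A_i$ must be somewhere dense, say in an interval $I$. Taking $H = A_i \cap I$ yields an $f$-homogeneous set of color $i$; moreover $(H, \leq_\Qb)$ is dense, since for any $a <_\Qb b$ in $H$ the subinterval $(a,b)_\Qb \subseteq I$ meets $A_i$ by the choice of $I$, producing an element of $H$ strictly between $a$ and $b$.

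For item (2), the plan exploits the convention in the paper that $\Qb$ has underlying domain $\Nb$. Given an $n$-coloring $f \colon \Nb \to n$, I would view $f$ directly as an $n$-coloring of $\Qb$ and apply $\ers$, obtaining a dense homogeneous set $H$. Since $(H, \leq_\Qb)$ is a dense linear order, it is in particular infinite, so $H$ is an infinite $f$-homogeneous subset of $\Nb$, witnessing $\rt^1$. None of the three subclaims presents a serious obstacle; the only piece of genuine content is the appeal to Lemma \ref{small} in the $\ist$ case, while the other two reductions are entirely elementary.
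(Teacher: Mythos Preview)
Your proposal is correct and follows essentially the same route as the paper: the same induced pair-coloring $g$ for the $\erp$ case (the paper phrases the impossibility of an infinite $0$-homogeneous set as ``every one-to-one function from an infinite set is unbounded'' rather than invoking finite pigeonhole, but this is the same observation), the same appeal to Lemma~\ref{small} for the $\ist$ case, and the same triviality for part~(2). If anything, your write-up is slightly more explicit than the paper's in verifying that $A_i \cap I$ is dense and in spelling out the argument for~(2).
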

\begin{proof}
$1)$ Let $f\colon\Qb\to n$ be a given coloring. First assume $\erp$, and let 
$g\colon[\Qb]^2\to 2$ be defined by $g(x,y)=1$ if and only if $f(x)=f(y)$. Provably in 
$\rca$ every one-to-one function from an infinite set is unbounded. Then by 
$\erp$ there exists a dense $1$-homogeneous set for $g$, which is homogeneous 
for $f$. 

Now assume $\ist$ and define $A_i=f^{-1}(i)$ for $i<n$. As $\Qb=\bigcup_{i<n}A_i$ 
is positive, by  lemma \ref{small}, 
there exists $i<n$ such that $A_i$ is positive. From $A_i$ we can compute a dense 
$i$-homogeneous set.  

$2)$ is trivial.
\end{proof}

As in \cite{Corduan2010Reverse}, the proof of our separation result consists 
of a few lemmas. We start by first adapting \cite[Lemma 
3.4]{Corduan2010Reverse} (see Lemma \ref{diagonal} below). The combinatorial core of the 
proof is based on the following.

\begin{lemma}[$\mathsf{I}\Sigma_1$]\label{disj}
For each~$e < n$, let $\Gamma_e$ consist of $4n$ 
pairwise disjoint intervals of $\Qb$. Then there exist $2n$ pairwise disjoint intervals 
$\langle I_{e,i}\colon e<n,i<2\rangle$ such that $I_{e,i}\in 
\Gamma_e$ for all $e<n$ and~$i<2$.
\end{lemma}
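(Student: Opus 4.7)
Plan: I will proceed by induction on $n$, available from $\mathsf{I}\Sigma_1$. The base case $n = 1$ is immediate: any two of the four pairwise disjoint intervals of $\Gamma_0$ are themselves disjoint.

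For the inductive step, given families $\Gamma_0, \ldots, \Gamma_{n-1}$ each of size at least $4n$, I seek two disjoint intervals $I_{n-1,0}, I_{n-1,1} \in \Gamma_{n-1}$ such that for every $e < n-1$,
\[
|\{J \in \Gamma_e : J \cap (I_{n-1,0} \cup I_{n-1,1}) = \emptyset\}| \geq 4(n-1).
\]
Setting $\Gamma_e' := \{J \in \Gamma_e : J \cap (I_{n-1,0} \cup I_{n-1,1}) = \emptyset\}$ then yields $n-1$ families, each of size $\geq 4(n-1)$, to which the induction hypothesis applies; the $2(n-1)$ pairwise disjoint intervals it produces, together with $I_{n-1,0}$ and $I_{n-1,1}$, form the required $2n$ intervals.

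The key subclaim is the existence of such a pair. For any $I \in \Gamma_{n-1}$ and any family $\Gamma_e$, the intervals of $\Gamma_e$ meeting $I$ split into at most one interval containing $I$ (by disjointness of $\Gamma_e$), at most two partially crossing an endpoint of $I$, and some number strictly contained in $I$. Crucially, each $J \in \Gamma_e$ is strictly contained in at most one interval of $\Gamma_{n-1}$ (again by disjointness), so summing over $\Gamma_{n-1}$ bounds the total ``strictly contained'' count by $|\Gamma_e| = 4n$, for an average of $1$ per $\Gamma_{n-1}$-interval. A pigeonhole/averaging argument over the $4n$ intervals of $\Gamma_{n-1}$ and the $n-1$ other families then selects two adjacent \emph{minimal} intervals in $\Gamma_{n-1}$ (containing no intervals from any other family); adjacency allows crossings and containments of the two picks to coincide, yielding a total loss of at most $4$ per family.

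The main obstacle is ensuring that the pigeonhole delivers two intervals simultaneously good for all $n - 1$ other families, rather than merely on average. The $4n$ bound in the hypothesis appears precisely calibrated: losing at most $4$ per family from $4n$ leaves $4(n-1)$, exactly what the recursive call requires. Pinning down the combinatorial case analysis on intersection types and the savings from adjacency and minimality will form the bulk of the argument.
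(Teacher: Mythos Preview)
Your inductive strategy hinges on the subclaim that one can always find two intervals $I_0,I_1\in\Gamma_{n-1}$ whose removal (together with all intervals meeting them) leaves at least $4(n-1)$ intervals in each of the other families. This subclaim is false. Take $n=7$, so each family has $28$ disjoint intervals. Partition the $28$ intervals of $\Gamma_6$ into six blocks of sizes $5,5,5,5,4,4$, and for each $e<6$ let $\Gamma_e$ place five small disjoint subintervals inside every interval of the $e$-th block (this uses at most $25$ intervals; park the remaining ones far away). Then \emph{every} $I\in\Gamma_6$ lies in some block $e$ and contains five members of $\Gamma_e$, so already
\[
|\{J\in\Gamma_e: J\cap I=\emptyset\}|\le 23<24=4(n-1).
\]
No single $I$ is admissible, let alone a pair. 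In particular, no interval of $\Gamma_{n-1}$ is ``minimal'' in your sense (containing no interval from another family), so the pigeonhole you sketch cannot even begin. Replacing $4n$ by $Cn$ for a larger constant $C$ does not help: packing $C+1$ subintervals into each target interval defeats the step for all large $n$.

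The paper sidesteps this by a greedy procedure rather than induction on $n$. At each stage it picks an interval that is inclusion-minimal among \emph{all} currently remaining intervals (across all families), adds it to the output, and deletes from every other family the intervals meeting it. A globally minimal interval contains no other interval, so it meets at most two members of any other family (at most one per endpoint); hence each step costs at most two per family. The invariant $|\Gamma_{e,s}|\ge 4n-2s$ (for families not yet contributing two intervals) keeps the process alive through $2n$ steps, after which the $2n$ chosen intervals are pairwise disjoint with at most two---hence exactly two---from each family. The crucial difference from your plan is that the algorithm never commits in advance to drawing from a fixed family $\Gamma_{n-1}$; it lets global minimality decide.
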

\begin{proof}
Let $\Gamma_e$, $e<n$, be given. Consider the following recursive procedure. At each 
stage we define $\Gamma_{e,s}$ for $e<n$ and $\Delta_s$ as follows.  At 
stage $0$, 
$\Gamma_{e,0}=\Gamma_e$ and $\Delta_0=\langle\rangle$. At stage $s+1$, if 
$|\Delta_s|=2n$ or $\Gamma^s=\bigcup_{e<n}\Gamma_{e,s}$ is empty, we are done. Otherwise 
search for $I\in \Gamma^{s}$  minimal with respect to inclusion (such an interval 
exists by $\mathsf{I}\Sigma_1$). Add $I$ to 
$\Delta_s$, that is, $\Delta_{s+1}=\Delta_s\conc I$. Let $e$ be such that $I\in 
\Gamma_{e,s}$.  If $\Delta_s$ already contains an interval in $\Gamma_{e}$, let 
$\Gamma_{e,s+1}=\emptyset$, otherwise let $\Gamma_{e,s+1}=\Gamma_{e,s}\setminus\{I\}$. 
For all $j\neq e$, let $\Gamma_{j,s+1}=\{J\in \Gamma_{j,s}\colon I\cap J=\emptyset\}$. 
Notice 
that by the choice of $I$ as minimal, at most two intervals from each $\Gamma_{j,s}$ with 
$j\neq e$ have nonempty intersection with $J$. 

By $\mathsf{I}\Sigma_1$ (indeed $\mathsf{I}\Sigma_0$) it is 
easy to show that, for all $s<2n+1$, $\Delta_s$ consists of $s$ disjoint intervals from 
$\bigcup_{e<n}\Gamma_e$ with at most two intervals from the same $\Gamma_e$, that every 
interval in $\Delta_s$ is disjoint from any interval in $\Gamma^s$, and that if $\Delta_s$ 
does not contain $2$ intervals from $\Gamma_e$, then $\Gamma_{e,s}$ contains at least 
$4n-2s$ intervals. In particular,  $\Delta_{2n}$ is as desired.
\end{proof}

\begin{lemma}[$\rca$]\label{diagonal}
For every real $X$ there exists an $X$-recursive function $d\colon \N\times\Qb\to 2$ 
such that for all $n$ and $e<n$, if $W_e^X$ is a dense set of $\Qb$, then there exist two 
disjoint intervals $I_0,I_1$ such that $W_e^X\cap I_i$ is infinite for all $i<2$ 
and $d(n,x)=i$ for all $i<2$ and for almost every $x\in I_i$.
\end{lemma}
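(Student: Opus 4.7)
The plan is to construct $d$ by adaptively building, for each $n$ and each $e<n$, a family $\Gamma_e$ of $4n$ pairwise disjoint open sub-intervals of $\Qb$ whose endpoints lie in $W_e^X$, and then invoking Lemma~\ref{disj} to extract pairwise disjoint intervals $I_{e,i}\in\Gamma_e$ on which $d(n,\cdot)$ is to be almost everywhere constantly $i$. The key observation is that if $W_e^X$ is dense (as a sub-order of $\Qb$), then for any two elements $w<w'$ of $W_e^X$ the open interval $(w,w')_\Qb$ contains infinitely many further elements of $W_e^X$ by iterating density; hence choosing interval endpoints inside $W_e^X$ automatically yields the ``$W_e^X\cap I_i$ is infinite'' clause. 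This is also why the intervals cannot be pre-specified obliviously of $W_e^X$: a dense sub-order of $\Qb$ may be confined to a small region and miss fixed intervals altogether.

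Concretely, I would fix an $X$-recursive enumeration of each $W_e^X$ together with a primitive recursive enumeration $(q_s)_{s\in\N}$ of $\Qb$. At each stage $s$, for each $e<n$ such that $W_{e,s}^X$ has enumerated at least $8n$ distinct values, freeze $\Gamma_e$ to be the collection of $4n$ intervals $(w_{2k-1},w_{2k})_\Qb$, where $w_1<\dots<w_{8n}$ are the first $8n$ distinct enumerated values sorted by magnitude; once frozen, $\Gamma_e$ never changes. Letting $S_s\subseteq\{0,\dots,n-1\}$ be the set of $e$ whose $\Gamma_e$ has been frozen by stage $s$, apply Lemma~\ref{disj} to $\{\Gamma_e:e\in S_s\}$ (legitimate since $|\Gamma_e|=4n\ge 4|S_s|$) to obtain pairwise disjoint intervals $I_{e,i}^s\in\Gamma_e$ for $e\in S_s$, $i<2$. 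Finally, set $d(n,q_s)=i$ if $q_s\in I_{e,i}^s$ for some $(e,i)$, and $d(n,q_s)=0$ otherwise; the resulting $d$ is $X$-recursive.

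For the verification, fix $n$ and $e<n$ with $W_e^X$ dense, so $W_e^X$ is infinite and $e\in S_s$ for all sufficiently large $s$. Since $S_s\subseteq\{0,\dots,n-1\}$ is monotone in $s$ and bounded, it stabilizes at some $S^*$ by a stage $s^*$, and for $s\ge s^*$ the intervals $I_{e,i}^s=I_{e,i}^*$ are fixed. Taking $I_0=I_{e,0}^*$ and $I_1=I_{e,1}^*$, disjointness follows from Lemma~\ref{disj}; each $I_i$ is of the form $(w,w')_\Qb$ with $w,w'\in W_e^X$, so the key observation gives $W_e^X\cap I_i$ infinite; and for every $q\in I_i$ processed at a stage $\ge s^*$ one has $d(n,q)=i$. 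Since only finitely many rationals are processed before stage $s^*$, the equality $d(n,x)=i$ holds for all but finitely many $x\in I_i$, which is the required ``almost every''.

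The main obstacle is the possible instability of $I_{e,i}^s$ as $S_s$ grows: Lemma~\ref{disj} applied to a larger input may rearrange its output, and the assignments $d(n,q_s)$ made earlier cannot be revised. The saving grace is that $|S_s|\le n$ forces $S_s$ to stabilize after finitely many stages; consequently only a finite set of rationals gets misclassified, which is exactly the leeway the ``almost every'' quantifier in the conclusion affords.
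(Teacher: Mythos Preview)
Your proposal is correct and follows essentially the same approach as the paper's proof sketch: wait until $W_e^X$ enumerates enough points to form $\Gamma_e$ consisting of $4n$ pairwise disjoint intervals with endpoints in $W_e^X$, apply Lemma~\ref{disj} at each stage to the families currently defined, and use that this selection can change at most $n$ times so that only finitely many rationals receive the ``wrong'' value of $d(n,\cdot)$. The only inessential differences are that the paper waits for $4n+1$ points (yielding consecutive open intervals) rather than $8n$, and refers to \cite{Corduan2010Reverse} for the routine details you have spelled out.
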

\begin{proof}[Proof sketch]
Our strategy to defeat $n$-many dense sets $\{A_e\colon e<n\}$ 
is to choose $2n$ pairwise disjoint intervals $I_{e,0}, I_{e,1}$ for $e<n$ so that 
each $I_{e,i}$ has end-points in $A_e$, and assign color $i$ to the interval 
$I_{e,i}$ for all $e<n$ and $i<2$. As we want to diagonalize against $n$-many potential 
dense sets of the form $W_e^X$ for $e<n$ and we cannot decide uniformly in $n$
which ones are dense, we act only when some $W_e^X$ outputs 
$4n+1$ points. We then specify a set $\Gamma_e$ of $4n$ disjoint intervals with 
end-points in $W_e^X$ and from each $\Gamma_e$ currently defined we choose
intervals $I_{e,0}$ and $I_{e,1}$ as in Lemma \ref{disj}. Every time we act, our choice 
of $I_{e,0}$ and $I_{e,1}$ might change, but this happens at most $n$-many times. As the 
actual construction is essentially the one in the proof of  \cite[Lemma 
3.4]{Corduan2010Reverse}, we leave the details to the reader.
\end{proof}

The next lemma is the key part of the whole argument (see \cite[Proposition 
3.5]{Corduan2010Reverse}).

\begin{lemma}\label{diagonal2}
Let $M$ be a model of $\rca+\neg\is2$. Then for some real $X\in M$ there is an $X$-recursive 
(in the sense of $M$) coloring $f$ of $\Qb$ into $M$-finitely many colors such that no 
$X$-recursive dense set is homogeneous for $f$. 
\end{lemma}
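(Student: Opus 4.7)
The plan is to adapt the model-theoretic construction of \cite[Proposition 3.5]{Corduan2010Reverse} from the tree setting to the rational setting, using Lemma \ref{diagonal} as the combinatorial core. The guiding idea is that Lemma \ref{diagonal} provides, for any oracle $X$, an $X$-recursive $d\colon M\times\Qb\to 2$ such that for each~$n$ the $2$-coloring $d(n,\cdot)$ defeats every $X$-recursive dense set $W_e^X$ with $e<n$. To defeat all such dense sets simultaneously with a coloring into $M$-finitely many colors, one leverages $\neg\ist$ to produce a $\Delta^0_2$ function whose range is unbounded in $M$ while living on a ``bounded'' (in the sense of $M$) index set.

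First I would extract from $\neg\ist$ the following structure: a $\Sigma^0_2$ formula $\varphi(x)\equiv\exists y\,\theta(x,y)$ with $\theta\in\Pi^0_1$ defining a proper $\Sigma^0_2$ initial segment $C\subsetneq M$ that contains $0$ and is closed under successor. Setting $g(x)=\mu y\,\theta(x,y)$ for $x\in C$ gives a $\Delta^0_2$ partial function on $C$. The range of $g$ must be unbounded in $M$: otherwise, if bounded by some $b\in M$, then $\varphi(x)$ would be equivalent to $\exists y\leq b\,\theta(x,y)$, which is expressible as a single $\Pi^0_1$ formula (an $M$-finite disjunction of $\Pi^0_1$ formulas is $\Pi^0_1$), forcing $C$ to be $\Pi^0_1$-definable; but $\is{1}$, available in $\rca$, rules out proper $\Pi^0_1$-cuts.

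Next, I would let $X\in M$ be a real encoding the graph of $g$ together with $\emptyset'$, so that $g$ is $X$-recursive in $M$; apply Lemma \ref{diagonal} to obtain the $X$-recursive $d$; and define $f\colon\Qb\to K$ into an $M$-finite set of colors $K$ by an encoding that assigns to each $q$ the data of $d(g(i),q)$ for appropriately chosen indices $i$. For the verification, given any $X$-recursive dense $H=W_e^X$, use the unboundedness of $g$'s range to pick an index $i$ contributing to $f$ with $g(i)>e$. Lemma \ref{diagonal} applied to $n=g(i)$ then supplies disjoint intervals $I_0,I_1$ on which $d(g(i),\cdot)$ is almost constantly $0$ and $1$ respectively and $H$ is infinite on each, producing two points of $H$ with distinct $f$-colors and contradicting homogeneity.

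The main obstacle is engineering this encoding. The naive choice $f(q)=\langle d(g(i),q):i<a\rangle$ for some $a\in M$ does give an $M$-finite codomain, but it only defeats the $W_e^X$ with $e$ below the $M$-finite bound $\max_{i<a}g(i)$. A successful construction must exploit the structure of the $\Sigma^0_2$-cut $C$ more subtly, so that the coloring reflects the unboundedness of $g$'s range without requiring an explicit $M$-finite enumeration of $C$. This is precisely where the $\neg\ist$ hypothesis is genuinely used, and, paralleling \cite{Corduan2010Reverse}, the subtlety is resolved via a carefully designed $\Delta^0_2$-approximation scheme layered on top of Lemma \ref{diagonal}.
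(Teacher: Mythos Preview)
Your overall strategy matches the paper's, and you correctly isolate the obstacle: a naive tuple $\langle d(g(i),q):i<a\rangle$ either has an undefined coordinate (if $a$ bounds the cut $C$, since $g$ is partial) or defeats only boundedly many $W_e^X$ (if $a\in C$). However, two points keep the proposal from being a proof.

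First, your choice of $X$ is not legitimate. You write ``let $X\in M$ be a real encoding the graph of $g$ together with $\emptyset'$,'' but $M$ is only a model of $\rca$: there is no reason for $\emptyset'$ or the graph of a $\Delta^0_2$ function to exist as sets in $M$. The correct move (and the paper's) is to take $X$ to be the set parameter already present in the $\Sigma^0_2$-instance whose induction fails; one then extracts an $X$-recursive \emph{total} function $h\colon\Nb^2\to\Nb$ such that the partial limit $h(y)=\lim_s h(y,s)$ has unbounded range on some $\{y:y<a\}$ with $a\in M$.

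Second, you do not actually resolve the obstacle you identify; you only gesture at a ``$\Delta^0_2$-approximation scheme.'' The concrete fix is to use the stage-$x$ approximation inside the coloring: set
\[
f(x)=\langle d(h(y,x),x):y<a\rangle\in 2^a.
\]
This is well defined and $X$-recursive because $h$ is total, and it lands in the $M$-finite set $2^a$. For the verification, given a dense $W_e^X$, pick $y<a$ with $h(y)$ defined and $h(y)>e$. Since $h(y,x)=h(y)$ for almost every $x$, the $y$th bit of $f(x)$ is eventually $d(h(y),x)$; now Lemma~\ref{diagonal} with $n=h(y)$ yields the two intervals on which $W_e^X$ receives different $y$th bits, so $W_e^X$ is not homogeneous. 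This single line---feeding the \emph{approximation} $h(y,x)$ rather than the limit $h(y)$ into $d$---is the missing idea that simultaneously makes $f$ total, keeps the codomain $M$-finite, and still reaches arbitrarily large indices via the unbounded range of the limit.
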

\begin{proof}
Let $X\in M$ witness  the failure of $\is2$. Then there exists  an $X$-recursive function 
$h\colon\Nb^2\to\N$ 
such that for some number $a$, the range of the partial function 
$h(y)=\lim_{s\to\infty}h(y,s)$ is unbounded on $\{y\colon y<a\}$ (see also \cite[Lemma 
3.6]{Corduan2010Reverse}). Define 
$f\colon \Qb\to 2^a$ by 
\[  f(x)=\langle d(h(y,x),x)\colon y<a \rangle,
\]
where $d(n,x)$ is the function of Lemma \ref{diagonal}. Let $W_e^X$ be a dense set of 
$\Qb$. We aim to show that $W_e^X$ is not homogeneous for $f$. Let $y<a$ such that 
$h(y)>e$. Observe that for almost every $x\in\Qb$ the $y$th bit of $f(x)$ is 
$d(h(y),x)$. As $e<h(y)$, let $I_0$ and $I_1$ be two intervals as in Lemma 
\ref{diagonal}. Now for sufficiently large  $x_0\in W_e^X\cap I_0$ and $x_1\in 
W_e^X\cap I_1$ we have $d(h(y,x_i),x_i)=d(h(y),x_i)=i$, and hence $f(x_0)\neq f(x_1)$.
\end{proof}

We can finally prove the analogue of \cite[Corollary 3.8]{Corduan2010Reverse}, which is 
the main result.

\begin{theorem}
Let $P$ be a $\Pi^1_1$ sentence. Then $\rca+P\vdash \ers$ if and only if 
$\rca+P\vdash\ist$. In particular, $\rca+\bst\not\vdash\ers$.
\end{theorem}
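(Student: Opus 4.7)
The forward direction is immediate: if $\rca + P \vdash \ist$, then since the lemma preceding Lemma \ref{disj} gives $\rca + \ist \vdash \ers$ (as a consequence of $\erp \lor \ist \to \ers$), we conclude $\rca + P \vdash \ers$.

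For the converse I would argue the contrapositive. Given any $M \models \rca + P + \neg\ist$, apply Lemma \ref{diagonal2} inside $M$ to obtain a real $X \in M$ and an $X$-recursive coloring $f\colon \Qb \to 2^a$ (with $a$ some, possibly $M$-nonstandard, number) such that no $X$-recursive dense set is homogeneous for $f$ in the sense of $M$. Now pass to the substructure $M_X$ having the same first-order part as $M$ and whose second-order part consists exactly of those $Y \in M$ that are Turing-reducible to $X$ in $M$. By a routine relativization of the standard proof that the $X$-recursive sets form a model of $\rca$, we have $M_X \models \rca$. Since $P$ is $\Pi^1_1$, say $P \equiv \forall Z\,\varphi(Z)$ with $\varphi$ arithmetical, and every $Z \in M_X$ is also in $M$, and arithmetical formulas with parameters in $M_X$ are absolute between $M$ and $M_X$ (same first-order part, shared parameters), we get $M_X \models P$. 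Finally, $f \in M_X$ is an instance of $\ers$ in $M_X$; any dense homogeneous solution in $M_X$ would be $\leq_T X$ in $M$, contradicting the choice of $f$. Hence $M_X \not\models \ers$, proving $\rca + P \not\vdash \ers$.

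The ``in particular'' clause follows because $\bst$ is a scheme of $\Pi^1_1$ sentences: any hypothetical proof of $\ers$ from $\rca + \bst$ uses only finitely many instances of $\bst$, whose conjunction is still $\Pi^1_1$, so applying the main statement to this conjunction would force $\rca + \bst \vdash \ist$, contradicting the classical separation of $\ist$ from $\bst$. The step I expect to be the main obstacle is the verification that $M_X \models \rca$ in the required generality, in particular that $\Sigma^0_1$-induction with set parameters from $M_X$ survives the passage to the substructure; this is a standard relativization but should be stated carefully, since $M$ need not be an $\omega$-model.
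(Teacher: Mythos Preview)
Your proposal is correct and follows essentially the same route as the paper: both argue the contrapositive of the hard direction by taking a model $M\models\rca+P+\neg\ist$, invoking Lemma~\ref{diagonal2} to obtain an $X$-recursive instance of $\ers$ with no $X$-recursive solution, and passing to the submodel with second-order part the $X$-recursive sets of $M$. Your write-up is in fact more explicit than the paper's on two points the paper leaves implicit---the downward absoluteness of $\Pi^1_1$ sentences to $M_X$, and the compactness step reducing the scheme $\bst$ to a single $\Pi^1_1$ sentence for the ``in particular'' clause---and your flagged concern about verifying $\rca$ in $M_X$ is indeed the standard relativization (closure under $\Delta^0_1$-comprehension and $\Sigma^0_1$-induction for a Turing ideal generated by one set), which goes through without difficulty even over nonstandard first-order parts.
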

\begin{proof}
The argument is the same as in the proof of \cite[Theorem 3.7]{Corduan2010Reverse}. As 
$\rca+\ist\vdash\ers$, we just need to prove one implication. Suppose that
$\rca+P\not\vdash\ist$, and let $M$ be a model of $\rca+P$ where $\ist$ fails. By Lemma \ref{diagonal2}, for some real $X\in M$, there exists an $X$-recursive instance of 
$\ers$ with no $X$-recursive solutions. Let $M'$ be the submodel of 
$M$ with the same 
first-order part as $M$ and second-order part consisting of the reals recursive in $X$ 
(in the 
sense of $M$). Therefore $\ers$ fails in $M'$.  Since $M'$ has same first-order part as $M$, $M'$ satisfies the $\Pi^1_1$ 
sentence $P$. As the reals of $M'$ are the ones recursive in a given real of $M$, $M$ satisfies 
$\rca$. Thus $\rca+P\not\vdash\ers$. 
\end{proof}

\section{$\er^2_2$ does not computably reduce to~$\rt^2_2$}\label{sect:separation}

Many proofs of $\Qsf \imp \Psf$ over~$\rca$ make use only of one $\Qsf$-instance
to solve a $\Psf$-instance. This is the notion of computable reducibility.

\begin{definition}[Computable reducibility] Fix two~$\Pi^1_2$ statements~$\Psf$ and $\Qsf$.
$\Psf$ is \emph{computably reducible} to~$\Qsf$ (written $\Psf \leq_c \Qsf$)
if every~$\Psf$-instance~$X_0$ computes a~$\Qsf$-instance~$X_1$ such that for every 
solution~$Y$ to~$X_1$,  $Y \oplus X_0$ computes a solution to~$X_0$.
\end{definition}

Proving that~$\Psf \leq_c \Qsf$ is not sufficient to deduce that~$\rca \vdash \Qsf \imp \Psf$.
One needs to prove that this reduciblity can be formalized within $\rca$, and in 
particular
that $\Sigma^0_1$-induction is sufficient to prove its validity.
The fine-grained nature of computable reducibility enables one to exhibit
distinctions between statements which would not have been revealed in reverse mathematics.
For example, $\rt^2_k$ and~$\rt^2_{k+1}$ are equivalent over~$\rca$ 
whereas~$\rt^2_{k+1} \not \leq_c \rt^2_k$~\cite{Patey2015weakness}.

This notion of reducibility can be also seen as an intermediary
step to tackle difficult separations~\cite{DzhafarovStrong}.
Proving that $\Psf \not \leq_c \Qsf$ is simpler than
separating $\Qsf$ from~$\Psf$ over $\omega$-models.
Lerman, Solomon and Towsner~\cite{Lerman2013Separating} 
introduced a framework to separate Ramsey-type statements
over~$\omega$-models, in which they transform a one-step diagonalization,
that is, computable non-reducibility, into a separation
in the sense of reverse mathematics.
In this section, we prove that the Erd\H{o}s-Rado theorem for pairs
does not reduce to Ramsey's theorem for pairs in one step.

\begin{theorem}\label{thm:er22-not-computably-reduces}
$\erp \not \leq_c \rt^2$.
\end{theorem}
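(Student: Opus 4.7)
The plan is to follow the paper's indicated route: first construct a $\Delta^0_2$ instance of $\erps$ that does not reduce (in the one-step sense) to any computable $\rt^2$-instance, and then lift it to a computable $\erp$-instance retaining the non-reducibility. Since the lifted instance is computable, any $\rt^2$-instance Turing-reducible to it is itself computable, so this refutes $\erp \leq_c \rt^2$.

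Step 1 (A $\Delta^0_2$ instance of $\erps$). Build a $\Delta^0_2$ partition $A = (A_0, A_1)$ of $\Qb$ by a finite-injury priority construction. The requirements are indexed by pairs $(g, \Psi)$, where $g$ ranges over computable $\rt^2$-instances with any finite number of colors and $\Psi$ over Turing functionals; for each such pair, the construction ensures that some $g$-homogeneous set $H$ makes $\Psi^{H \oplus A}$ fail to be an $\erps$-solution to $A$. The key combinatorial ingredient is the \emph{disjoint extension commitment}: at each stage, each not-yet-decided rational may be placed either into $A_0$ or into $A_1$, and these two disjoint possibilities are used to diagonalize against the two admissible solution types, namely $\Psi^{H \oplus A}$ threatening to be an infinite subset of $A_0$ or a dense subset of $A_1$. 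For the dense case, Lemma \ref{disj} guarantees the availability of enough pairwise disjoint sub-intervals, in one of which we can place a rational into $A_0$ to refute density in $A_1$; for the infinite case, we instead place a threatened rational into $A_1$ to spoil $A_0$-inclusion.

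Step 2 (Lift to a computable $\erp$-instance). Using a standard stabilization, convert $A$ into a computable stable $2$-coloring $f\colon[\Qb]^2\to 2$: from a computable approximation $a(x,s)$ with $\lim_s a(x,s) = A_1(x)$, set $f$ on a pair $\{x,y\}$ equal to $a$ evaluated at the later-enumerated element. Then $f$ is computable and stable with $\lim_y f(x,y) = A_1(x)$. Any infinite $0$-homogeneous set for $f$ is contained in $A_0$ and any dense $1$-homogeneous set for $f$ is contained in $A_1$; conversely, any $\erps$-solution to $A$ computably yields an $\erp$-solution to $f$ by a greedy thinning, since a dense subset of $\Qb$ meets every interval infinitely often. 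Consequently the non-reducibility of Step 1 transfers: for any computable $g$ and any $\Psi$, the composition of $\Psi$ with the thinning is also a functional, and the $H$ produced in Step 1 witnesses that $\Psi^{H \oplus f}$ is not an $\erp$-solution to $f$.

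The main obstacle is Step 1: engineering the priority construction so the two kinds of diagonalization can coexist. A single placement of a rational cannot simultaneously shrink $A_0$ (to defeat an "infinite-in-$A_0$" attempt) and shrink $A_1$ (to defeat a "dense-in-$A_1$" attempt) at the same rational; careful scheduling, exploiting that each requirement commits to exactly one of the two modes once its threatened output is sufficiently revealed, resolves the tension. As the paper stresses, this one-step argument does not generalize to a separation over $\omega$-models precisely because the construction diagonalizes against one fixed pair $(g,\Psi)$ per priority level, whereas an $\omega$-model separation would have to survive every reduction simultaneously.
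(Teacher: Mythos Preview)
Your Step 1 has the quantifiers in the wrong order. Computable non-reducibility requires that for the single instance you build and for \emph{each} computable $\rt^2$-instance $g$, there exist \emph{one} infinite $g$-homogeneous set $H$ such that $\Psi^{H}$ fails to be a solution for \emph{every} functional $\Psi$. Indexing requirements by pairs $(g,\Psi)$ and, for each pair, arranging that ``some $g$-homogeneous set $H$ makes $\Psi^{H\oplus A}$ fail'' only gives: for every $g$ and every $\Psi$ there is \emph{some} $H_{g,\Psi}$ defeating that particular $\Psi$. The witness $H$ must be uniform in $\Psi$, not chosen after $\Psi$ is fixed; your closing remark that ``the construction diagonalizes against one fixed pair $(g,\Psi)$ per priority level'' confirms you are proving the weaker statement.

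Accordingly the construction must be two-layered, and your sketch only supplies the first layer (placing rationals into $A_0$ or $A_1$). The paper proceeds as follows: first build the $\Delta^0_2$ partition $A_0\cup A_1$ so that every member of a fixed low Scott set $\Scal$ is \emph{fair} for $A_0,A_1$ (Lemma~\ref{lem:partition-emptyset-er-fair}, a no-injury construction); then, \emph{separately for each} computable $g$, run a Mathias-type forcing inside $\Scal$ (Theorem~\ref{thm:rt22-not-er-fairness}) to build a single infinite $g$-homogeneous $H$ meeting all requirements $\Qcal_{\vec e}$, one per tuple of functionals. That forcing is the substantive part: it passes through $\Pi^0_1$ classes of $k$-partitions (hence the need for $\wkl$ in $\Scal$) and uses the matrix/type machinery together with fairness of $A_0,A_1$ to force, on some side, that the candidate output either meets the wrong $A_i$ or cannot be dense/infinite. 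None of this is captured by a single finite-injury list over pairs $(g,\Psi)$, and your proposal says nothing about how $H$ itself is produced. Step~2 (the Shoenfield-limit lift to a computable stable coloring) matches the paper's final paragraph and is fine.
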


Interestingly, this diagonalization does not seem to be easily
generalizable to a separation over~$\omega$-models.
A reason is that the fairness property  ensured by the~$\erp$-instance
does not seem to be preserved by weak K\"onig's lemma.
This is hitherto the first example of a computable non-reducibility
of a principle~$\Psf$ to~$\rt^2$ which is not generalizable
to a proof that~$\rt^2_2$ does not imply~$\Psf$ over~$\rca$.

The remainder of this section is devoted to a proof of Theorem~\ref{thm:er22-not-computably-reduces}.
The notion of fairness presented below may have some ad-hoc flavor.
It has been obtained by applying the main ideas of the framework
of Lerman, Solomon and Towsner~\cite{Lerman2013Separating,Patey2015Iterative}.
Thanks to an analysis of the combinatorics of Ramsey's theorem for pairs
and the Erd\H{o}s-Rado theorem for pairs, we prove our
computable non-reducibility result by constructing an instance of~$\erp$
ensuring the density of the diagonalizing conditions in the forcing notion of~$\rt^2_2$.
Then we abstract the diagonalization to 
any $\Sigma^0_1$ formula, to get rid of the specificities
of the forcing notion of~$\rt^2_2$ in the notion of fairness preservation.
See~\cite{Patey2015strength} for a detailed example of the various steps of this framework,
leading to a separation of~$\rt^2_2$ from the tree theorem for pairs over~$\rca$.

\begin{definition}[Simple partition]
A \emph{simple partition}~$\inter_\Qb(S)$ is a finite sequence of open intervals~$(-\infty, x_0), (x_0, x_1), \dots, (x_{n-1}, +\infty)$
for some set of rationals~$S = \{x_0 <_\Qb \dots <_\Qb x_{n-1}\}$. We set 
$\inter_\Qb(\emptyset) = \{\Qb\}$. 
A simple partition~$I_0, \dots, I_{n-1}$
\emph{refines} another simple partition~$J_0, \dots, J_{m-1}$ if for every~$i < n$,
there is some~$j < m$ such that~$I_i \subseteq J_j$.
Given two simple partitions~$I_0, \dots, I_{n-1}$ and~$J_0, \dots, J_{m-1}$,
the product~$\vec{I} \otimes \vec{J}$ is the simple partition
\[
\{ I \cap J : I \in \vec{I} \wedge J \in \vec{J} \}
\]
\end{definition}
One can easily see that~$\inter_\Qb(S)$ refines~$\inter_\Qb(T)$ if~$T \subseteq S$
and that $\inter_\Qb(S\cup T)=\inter_\Qb(S)\otimes\inter_\Qb(T)$.
Note that every simple partition has a finite description, since the set~$S$
and each rational has a finite description.
Also note that a simple partition is not a true partition of~$\Qb$ since the endpoints do not belong
to any interval. However, we have~$S \cup \bigcup \inter_\Qb(S) = \Qb$.

\begin{definition}[Matrix]
An \emph{$m$-by-$n$ matrix} $M$ is a rectangular array of rationals $x_{i,j} \in \Qb$
such that $x_{i,j} <_\Qb x_{i,k}$ for each~$i < m$ and $j < k < 
n$. 
The $i$th \emph{row} $M(i)$ of the matrix $M$ is the $n$-tuple of rationals $x_{i,0} < 
\dots < x_{i,n-1}$. The simple partition $\inter_\Qb(M)$ is defined by  $\bigotimes_{i < 
m} \inter_\Qb(M(i))$. In particular, $\bigotimes_{i <m} \inter_\Qb(M(i))$ refines the 
simple partition~$\inter_\Qb(M(i))$ for each~$i < m$.
\end{definition}

It is important to notice that an $m$-by-$n$ matrix is formally a 3-tuple~$\tuple{m,n, M}$
and not only the matrix itself~$M$. This distinction becomes important
when dealing with the degenerate cases. An~$m$-by-0 matrix~$M$
and a 0-by-$n$ matrix~$N$ are both empty.  However, they have different sizes.
In particular, we shall define the notion of~$M$-type for a matrix, and this definition
will depend on the number of columns of the matrix~$M$, which is~0 for~$M$, and~$n$ for~$N$.
Notice also that, for a degenerate matrix $M$, the simple partition $\inter_{\Qb}(M)$ 
is the singleton $\{\Qb\}$. 

Given a simple partition~$\vec{I}$, we want to classify the $k$-tuples of rationals
according to which interval of~$\vec{I}$ they belong to.
This leads to the notion of~$(\vec{I},k)$-type.

\begin{definition}[Type]
Given a simple partition~$I_0, \dots, I_{n-1}$ and some~$k \in \omega$,
an \emph{$(\vec{I},k)$-type} is a tuple~$T_0, \dots, T_{k-1}$
such that~$T_i \in \vec{I}$ for each~$i < k$.
Given an $m$-by-$n$ matrix $M$, an~\emph{$M$-type} is an~$(\inter_\Qb(M), n)$-type.
\end{definition}

We now state two simple combinatorial lemmas which will be useful later.
The first trivial lemma simply states that each $m$-tuple of rationals (different from
the endpoints of a simple partition) belongs to a type.

\begin{lemma}\label{lem:tuple-has-mtype}
For every simple partition $I_0, \dots, I_{n-1}$ and every~$k$-tuple
of rationals~$x_0, \dots, x_{k-1} \in \bigcup_{i < n} I_i$, there is an $(\vec{I},k)$-type $T_0, \dots, T_{k-1}$
such that~$x_j \in T_j$ for each~$j < k$.
\end{lemma}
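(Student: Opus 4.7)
The plan is to exploit the fact that the intervals of a simple partition are pairwise disjoint by construction. Concretely, a simple partition $\inter_\Qb(S)$ for $S = \{x_0 <_\Qb \dots <_\Qb x_{n-1}\}$ consists of the open intervals $(-\infty, x_0)$, $(x_0, x_1)$, \dots, $(x_{n-1}, +\infty)$, and any two distinct such intervals are disjoint since rationals are linearly ordered. Therefore, for any rational $x \in \bigcup_{i < n} I_i$, there exists a unique index $i < n$ with $x \in I_i$.

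Given this, the proof is immediate. For each $j < k$, since $x_j \in \bigcup_{i < n} I_i$, define $T_j$ to be the unique member of $\vec{I}$ containing $x_j$. The tuple $(T_0, \dots, T_{k-1})$ is then an $(\vec{I}, k)$-type by definition, and it satisfies $x_j \in T_j$ for every $j < k$ by construction. Formally, the index $i_j$ such that $x_j \in I_{i_j}$ can be located by bounded search using $\Delta^0_0$ comprehension available in $\rca$, so the argument goes through in the base theory without any induction strength beyond what is already standard. There is no real obstacle here; the lemma is foundational setup for the later notion of $M$-type, and the only content is the observation that the defining intervals of a simple partition cover everything except the finite set of endpoints $S$, in a pairwise disjoint fashion.
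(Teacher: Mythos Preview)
Your proof is correct and follows essentially the same approach as the paper: for each $j<k$, pick an interval $T_j\in\vec{I}$ containing $x_j$, which exists because $x_j\in\bigcup_{i<n}I_i$. The paper's own argument is even terser and does not bother with uniqueness or the $\rca$ formalization remark, since only existence is claimed here (uniqueness is invoked separately later in the paper when it is actually needed).
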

\begin{proof}
Fix $k$ rationals $x_0, \dots, x_{k-1}$. For each~$i < k$, 
there is some interval~$T_i \in \vec{I}$ such that~$x_i \in T_i$ since $x_i \in \bigcup_{j < n} 
I_j$. The sequence $T_0, \dots, T_{k-1}$ is the desired~$(\vec{I}, k)$-type.
\end{proof}

The next lemma is a consequence of the pigeonhole principle.

\begin{lemma}\label{lem:mtype-interval-disjoint}
For every $m$-by-$n$ matrix~$M$ and every $M$-type $T_0, \dots, T_{n-1}$,
there is an $m$-tuple of intervals~$J_0, \dots, J_{m-1}$ with~$J_i \in \inter_\Qb(M(i))$ such that
\[
(\bigcup_{j < n} T_j) \cap (\bigcup_{i < m} J_i) = \emptyset
\]
\end{lemma}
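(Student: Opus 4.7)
The plan is to reduce the claim to a single pigeonhole argument applied row by row. First I would unpack the $M$-type: since $\inter_\Qb(M) = \bigotimes_{i<m} \inter_\Qb(M(i))$, every element of $\inter_\Qb(M)$ has a unique presentation as an intersection of one interval from each row. In particular, for each $j < n$ write
\[
T_j \;=\; \bigcap_{i < m} K_{j,i}, \qquad K_{j,i} \in \inter_\Qb(M(i)).
\]

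Next, fix a row $i < m$. Because $M(i)$ is an $n$-tuple of rationals, the simple partition $\inter_\Qb(M(i))$ consists of exactly $n+1$ pairwise disjoint open intervals. The $M$-type provides only $n$ intervals $K_{0,i}, \dots, K_{n-1,i}$ in $\inter_\Qb(M(i))$, so by pigeonhole one can select
\[
J_i \;\in\; \inter_\Qb(M(i)) \setminus \{K_{j,i} : j < n\}.
\]
Since any two distinct intervals from a simple partition are disjoint, $J_i \cap K_{j,i} = \emptyset$ for every $j < n$, and hence $J_i \cap T_j \subseteq J_i \cap K_{j,i} = \emptyset$.

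Doing this independently for each $i < m$ produces the desired $m$-tuple $J_0, \dots, J_{m-1}$. The equality $(\bigcup_{j<n} T_j) \cap (\bigcup_{i<m} J_i) = \emptyset$ then follows immediately by distributing the intersection over the unions. The only bookkeeping is the degenerate case $m = 0$ (where the right-hand union is empty and the conclusion is vacuous) and $n = 0$ (where $\inter_\Qb(M(i)) = \{\Qb\}$ and there are no types to avoid, so one may pick $J_i = \Qb$ harmlessly, though in this case the condition itself is vacuous since there are no $T_j$). I do not anticipate a genuine obstacle: the whole argument is a single application of pigeonhole together with the disjointness of intervals in a simple partition, and the induction needed to run the choice over all $m$ rows is easily available even in $\rca$.
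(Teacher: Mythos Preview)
Your proposal is correct and follows essentially the same route as the paper: for each row $i$, note that every $T_j$ lies inside a single interval of $\inter_\Qb(M(i))$ (your $K_{j,i}$; the paper just uses the refinement $\inter_\Qb(M) \leq \inter_\Qb(M(i))$ to say $T_j \subseteq J$ for some $J \in \inter_\Qb(M(i))$), and since $|\inter_\Qb(M(i))| = n+1$ while there are only $n$ types, pigeonhole yields a free $J_i$. Your write-up is slightly more explicit about the decomposition and the degenerate cases, but the argument is identical.
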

\begin{proof}
Let $T_0, \dots, T_{n-1}$ be an~$M$-type.
For every~$i < m$ and~$j < n$, there is some $J \in \inter_\Qb(M(i))$ such that~$T_j \subseteq J$.
Since $|\inter_\Qb(M(i))| = n+1$, there is an interval~$J_i \in \inter_\Qb(M(i))$ such that $(\bigcup_{j < n} T_j) \cap J_i = \emptyset$.
\end{proof}

\begin{definition}[Formula, valuation]
Given an $m$-by-$n$ matrix $M$, an \emph{$M$-formula} 
is a formula $\varphi(\vec{U}, \vec{V})$ with distinguished (finite coded) set variables $U_j$ for each $j < n$
and~$V_{i,I}$ for each~$i < m$ and~$I \in \inter_\Qb(M(i))$.
An \emph{$M$-valuation $(\vec{R}, \vec{S})$} is a tuple of finite sets $R_j \subseteq \Qb$ for each~$j < n$
and~$S_{i,I} \subseteq I$ for each~$i < m$ and~$I \in \inter_\Qb(M(i))$.
The $M$-valuation~$(\vec{R}, \vec{S})$ is of type~$\vec{T}$ for some~$M$-type~$T_0, \dots, T_{n-1}$
if moreover $R_j \subseteq T_j$ for each~$j < n$.
The $M$-valuation $(\vec{R}, \vec{S})$ \emph{satisfies} $\varphi$ if $\varphi(\vec{R}, \vec{S})$ holds.
\end{definition}

Given some valuation $(\vec{R}, \vec{S})$ and some integer $s$, we write $(\vec{R}, \vec{S}) > s$
to say that for every $x \in (\bigcup \vec{R}) \cup (\bigcup \vec{S})$, $x > s$. 
Following the terminology of~\cite{Lerman2013Separating}, we define 
the notion of essentiality for a formula (an abstract requirement),
which corresponds to the idea that there is room for diagonalization
since the formula is satisfied by valuations which are arbitrarily far.

\begin{definition}[Essential formula]
Given an $m$-by-$n$ matrix~$M$, an $M$-formula $\varphi$
is \emph{essential} if for every $s \in \omega$,
there are an $M$-type $\vec{T}$ and an $M$-valuation $(\vec{R}, \vec{S}) > s$ of type~$\vec{T}$
such that $\varphi(\vec{R}, \vec{S})$ holds.
\end{definition}

The notion of fairness is defined accordingly. If some formula
is essential, that is, leaves enough room for diagonalization, then there is
an actual valuation which will diagonalize against the~$\erp$-instance.

\begin{definition}[Fairness]
Fix two sets $A_0, A_1 \subseteq \Qb$.
Given an $m$-by-$n$ matrix $M$, an $M$-valuation~$(\vec{R}, \vec{S})$ 
\emph{diagonalizes} against $A_0, A_1$
if $\bigcup \vec{R} \subseteq A_1$ and for every $i < m$, there is some~$I \in \inter_\Qb(M(i))$ 
such that $S_{i,I} \subseteq A_0$.
A set~$X$ is \emph{fair} for~$A_0, A_1$ if for every $m, n \in \omega$, every $m$-by-$n$ matrix~$M$
and every $\Sigma^{0,X}_1$ essential $M$-formula, there is an $M$-valuation $(\vec{R}, \vec{S})$ diagonalizing against $A_0, A_1$ 
such that $\varphi(\vec{R}, \vec{S})$ holds.
\end{definition}

Of course, if $Y \leq_T X$, then every $\Sigma^{0,Y}_1$ formula is $\Sigma^{0,X}_1$.
As an immediate consequence, if $X$ is fair for some $A_0, A_1$ and $Y \leq_T X$, then $Y$ is fair for $A_0, A_1$.

Now that we have introduced the necessary terminology, we create a non-effective
instance of~$\erps$ which will serve as a bootstrap for fairness preservation. Remember that $erps$ asserts that for every partition $A_0\cup A_1=\Qb$ of the rationals there exists either 
an infinite subset of $A_0$ or a dense subset of $A_1$.

\begin{lemma}\label{lem:partition-emptyset-er-fair}
For every set~$C$, there exists a $\Delta^{0,C}_2$ partition $A_0 \cup A_1 = \Qb$ such that
$C$ is fair for~$A_0, A_1$.
\end{lemma}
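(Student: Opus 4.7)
The plan is to construct $A_0$ and $A_1$ by a finite-extension argument carried out $C'$-computably, so the resulting partition is $\Delta^{0,C}_2$. I fix an enumeration $(\varphi_e, M_e)_{e \in \omega}$ of all pairs $(\varphi, M)$ where $M$ is a matrix and $\varphi$ is (an index for) a $\Sigma^{0,C}_1$ $M$-formula, and I process one pair per stage. At stage $e$, I either satisfy the fairness requirement for $(\varphi_e, M_e)$ or certify, through failure of the relevant $C'$-question, that $\varphi_e$ cannot be essential.

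Before stage $e$, there are finite disjoint sets $F_0 \subseteq A_0$ and $F_1 \subseteq A_1$ of rationals committed so far. Let $s_e$ be a natural number larger than every committed rational. Using $C'$, I ask the $\Sigma^{0,C}_1$ question: does there exist an $M_e$-type $\vec{T}$ together with an $M_e$-valuation $(\vec{R}, \vec{S}) > s_e$ of type $\vec{T}$ satisfying $\varphi_e(\vec{R}, \vec{S})$? If so, $C'$ produces such witnesses. By Lemma~\ref{lem:mtype-interval-disjoint} applied to $M_e$ and $\vec{T}$, I may further choose intervals $J_i \in \inter_\Qb(M_e(i))$ for each $i < m_e$ so that $(\bigcup_j T_j) \cap (\bigcup_i J_i) = \emptyset$. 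I commit $\bigcup_j R_j$ to $A_1$ and $\bigcup_i S_{i, J_i}$ to $A_0$. If the $C'$-query returns negatively, I do nothing substantive at this stage. In either case, I finish the stage by placing the $e$-th rational of a fixed enumeration of $\Qb$ into $A_0$ if it has not yet been committed.

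Consistency within a single stage is immediate: $R_j \subseteq T_j$ and $S_{i, J_i} \subseteq J_i$, and Lemma~\ref{lem:mtype-interval-disjoint} makes the families $\{T_j\}$ and $\{J_i\}$ mutually disjoint. Consistency across stages is automatic since every new commitment consists of rationals of index greater than $s_e$, hence disjoint from all earlier commitments. The construction is entirely $C'$-computable and assigns every rational at some stage, so $A_0 \cup A_1 = \Qb$ and $A_0, A_1 \in \Delta^{0,C}_2$. For fairness: if $\varphi$ is an essential $\Sigma^{0,C}_1$ $M$-formula, then at its stage $e$, applying the definition of essentiality with $s = s_e$ furnishes a witness, so the $C'$-query succeeds and the committed valuation diagonalizes against $A_0, A_1$.

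The main obstacle is making sure that the commitments produced by a single valuation do not force a rational to lie in both $A_0$ and $A_1$; a generic $M$-valuation offers no disjointness between the $R_j$'s and the various $S_{i,I}$'s. This is exactly what Lemma~\ref{lem:mtype-interval-disjoint} is designed to solve: it allows one to select, for each row $i$, an interval $J_i \in \inter_\Qb(M_e(i))$ that avoids the type $\vec{T}$, thereby mechanically decoupling the $A_1$-locations (inside $\bigcup_j T_j$) from the chosen $A_0$-locations (inside $\bigcup_i J_i$).
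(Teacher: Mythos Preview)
Your proposal is correct and follows essentially the same no-injury priority construction as the paper: a $C'$-computable stagewise build of $A_0,A_1$, querying at each stage whether a suitable valuation exists above the current threshold, and invoking Lemma~\ref{lem:mtype-interval-disjoint} to separate the $A_1$-commitments (inside $\bigcup_j T_j$) from the $A_0$-commitments (inside $\bigcup_i J_i$). The only cosmetic difference is that the paper fills the entire segment $(b,d]_\Nb$ at each stage (putting $\bigcup_i J_i\cap(b,d]_\Nb$ into $A_0$ and the remainder into $A_1$), whereas you commit only the finitely many elements of the witnessing valuation and sweep uncommitted rationals into $A_0$ one per stage; both achieve the same result.
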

\begin{proof}
The proof is  a no-injury priority construction.
Let $M_0, M_1, \dots$ be an enumeration of all $m$-by-$n$ matrices
and $\varphi_0, \varphi_1, \dots$ be an effective enumeration of all $\Sigma^{0,C}_1$ $M_k$-formulas for every $m, n \in \omega$.
We want to satisfy the following requirements for each pair of integers~$e,k$.

\begin{quote}
$\Rcal_{e,k}$: If the~$M_k$-formula $\varphi_e$ is essential, then $\varphi_e(\vec{R}, \vec{S})$ holds
for some $M_k$-valuation $(\vec{R}, \vec{S})$ diagonalizing against $A_0, A_1$.
\end{quote}

The requirements are ordered via the standard pairing function $\tuple{\cdot, \cdot}$.
The sets $A_0$ and $A_1$ are constructed by a $C'$-computable list of 
finite approximations $A_{i,0} \subseteq A_{i,1} \subseteq \dots$
such that all elements added to~$A_{i,s+1}$ from~$A_{i,s}$
are strictly greater than the maximum of~$A_{i,s}$ (in the~$\Nb$ order) for each~$i < 2$. 
We then let $A_i = \bigcup_s A_{i,s}$ which will be a~$\Delta^{0,C}_2$ set.
At stage 0, set $A_{0,0} = A_{1,0} = \emptyset$. Suppose that at stage $s$,
we have defined two disjoint finite sets $A_{0,s}$ and $A_{1,s}$ such that
\begin{itemize}
	\item[(i)] $A_{0,s} \cup A_{1,s} = [0,b]_\Nb$ for some integer $b \geq s$
	\item[(ii)] $\Rcal_{e',k'}$ is satisfied for every $\tuple{e',k'} < s$
\end{itemize}
Let $\Rcal_{e,k}$ be the requirement such that $\tuple{e,k} = s$.
Decide $C'$-computably whether there are some
$M_k$-type~$\vec{T}$ and some $M_k$-valuation $V=(\vec{R}, \vec{S}) > b$ of type~$\vec{T}$
such that $\varphi_e(V)$ holds. If so, $C$-effectively fetch~$\vec{T} = T_0, \dots, T_{n-1}$
and such a $(\vec{R}, \vec{S}) > b$. Let $d$ be an upper bound (in the~$\Nb$ order) on the rationals in $(\vec{R}, \vec{S})$.
By Lemma~\ref{lem:mtype-interval-disjoint}, for each~$i < m$,
there is some~$J_i \in \inter_\Qb(M(i))$ such that
\[
(\bigcup_{j < n} T_j) \cap (\bigcup_{i < m} J_i) = \emptyset
\]
Set $A_{0,s+1} = A_{0,s}\cup \bigcup_{i < m} J_i \cap (b,d]_\Nb$ and $A_{1,s+1} = [0,d]_\Nb \setminus A_{0,s+1}$.
This way, $A_{0,s+1} \cup A_{1,s+1} = [0, d]_\Nb$.
By the previous equation, $\bigcup_{j < n} T_j \cap (b, d]_\Nb \subseteq [0,d]_\Nb \setminus A_{0,s+1}$ 
and the requirement $\Rcal_{e,k}$ is satisfied.
If no such $M_k$-valuation is found, the requirement $\Rcal_{e,k}$ is vacuously satisfied.
Set $A_{0,s+1} = A_{0,s} \cup \{b+1\}$ and $A_{1,s+1} = A_{1,s}$.
This way, $A_{0,s+1} \cup A_{1,s+1} = [0, b+1]_\Nb$.
In any case, go to the next stage. This finishes the construction.
\end{proof}

\begin{lemma}\label{lem:er-fair-not-solution}
If~$X$ is fair for some sets~$A_0, A_1 \subseteq \Qb$,
then $X$ computes neither an infinite subset of~$A_0$,
nor a dense subset of~$A_1$.
\end{lemma}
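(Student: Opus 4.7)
The plan is to argue contrapositively. Suppose some $W_e^X$ is either an infinite subset of~$A_0$ or a dense subset of~$A_1$. For a suitable degenerate matrix~$M$, I would exhibit an essential $\Sigma^{0,X}_1$ $M$-formula $\varphi$ such that every diagonalizing valuation of $\varphi$ is forced to place a common element in $A_0 \cap A_1$, which is empty in the intended setting (namely, the partition produced by Lemma~\ref{lem:partition-emptyset-er-fair}). This would contradict fairness. The two sub-cases are handled symmetrically by the two minimal matrix shapes $0$-by-$1$ and $1$-by-$0$.

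For the infinite-subset case, I would take $M$ to be the $0$-by-$1$ matrix. Here $\inter_\Qb(M) = \{\Qb\}$ (empty product), an $M$-valuation consists of a single finite set $R_0 \subseteq \Qb$, and the diagonalization condition reduces to $R_0 \subseteq A_1$ (the $\vec{S}$-clause being vacuous since $m=0$). Set $\varphi(U_0) \equiv (\exists t \in U_0)(t \in W_e^X)$, which is $\Sigma^{0,X}_1$. Essentiality is immediate from $W_e^X$ being infinite: given~$s$, pick $x \in W_e^X$ with $x > s$ in~$\Nb$ and take $R_0 = \{x\}$. Fairness then supplies a diagonalizing valuation satisfying $\varphi$; its witness lies in $A_1 \cap W_e^X \subseteq A_1 \cap A_0 = \emptyset$, a contradiction.

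The dense-subset case is dual: take $M$ to be the $1$-by-$0$ matrix. Then $\inter_\Qb(M(0)) = \inter_\Qb(\emptyset) = \{\Qb\}$, an $M$-valuation consists of a single finite set $S_{0,\Qb} \subseteq \Qb$, and the diagonalization condition reduces to $S_{0,\Qb} \subseteq A_0$ (the $\vec{R}$-clause being vacuous and only $I = \Qb$ being available). The analogous formula $\varphi(V_{0,\Qb}) \equiv (\exists t \in V_{0,\Qb})(t \in W_e^X)$ is essential because any dense subset of~$\Qb$ is infinite in cardinality, and fairness again produces an element of $A_0 \cap A_1$, a contradiction. The only real care required is to correctly specialize $\inter_\Qb(M)$, the notion of $M$-valuation, and the diagonalization clause to these degenerate shapes; once that is transparent, the argument is just a single appeal to the fairness hypothesis with a trivial essentiality check.
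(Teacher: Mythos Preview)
Your argument is correct and follows essentially the same approach as the paper: both use the degenerate $0$-by-$1$ and $1$-by-$0$ matrices together with the trivial ``nonempty intersection'' formula, and both ultimately produce an element of $A_0 \cap A_1$. The only cosmetic difference is that the paper first invokes downward closure of fairness under $\leq_T$ to reduce to showing that any \emph{infinite} fair set $X$ meets both $A_0$ and $A_1$, and then runs the matrix argument with the formula $U \cap X \neq \emptyset$ applied to $X$ itself rather than to $W_e^X$; your direct version is equally valid. You are also right to flag that the final contradiction needs $A_0 \cap A_1 = \emptyset$---the paper's reduction step (``intersects $A_1$'' $\Rightarrow$ ``not a subset of $A_0$'') uses this implicitly as well, and in every application the $A_i$ do form a partition.
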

\begin{proof}
Since fairness is downward-closed under  Turing reducibility,
it suffices to prove that if~$X$ is infinite and fair for~$A_0, A_1$,
then it intersects both~$A_0$ and~$A_1$.

We first prove that~$X$ intersects~$A_1$.
Let~$M$ be the 0-by-1 matrix and~$\varphi(U)$ be the
$M$-formula which holds if~$U \cap X \neq \emptyset$. Note that $\varphi(U)$
is $\Sigma^{0,X}_1$ since $U$ is a finite coded set.
The only~$M$-type is~$\Qb$ and since $X$ is infinite, $\varphi$ is essential.
By fairness of~$X$, there is an $M$-valuation~$R$ diagonalizing against~$A_0, A_1$
such that~$\varphi(R)$ holds. By definition of diagonalization, $R \subseteq A_1$.
Since~$R \cap X \neq \emptyset$, this shows that~$X \cap A_1 \neq \emptyset$.

We now prove that~$X$ interects~$A_0$.
Let~$M$ be the~1-by-0 matrix and~$\varphi(V)$ be the~$\Sigma^{0,X}_1$
$M$-formula which holds if~$V \cap X \neq \emptyset$.
The $M$-formula $\varphi$ is essential since $X$ is infinite.
By fairness of~$X$, there is an $M$-valuation~$S$ diagonalizing against~$A_0, A_1$
such that~$\varphi(S)$ holds. By definition of diagonalization, $S \subseteq A_0$.
Since~$S \cap X \neq \emptyset$, this shows that~$X \cap A_0 \neq \emptyset$.
\end{proof}

Note that we did not use the fact that~$X$ is dense to make sure it intersects $A_0$.
Density will be useful in the proof of Theorem~\ref{thm:rt22-not-er-fairness}.

\begin{definition}
A \emph{Scott set} is a set~$\Scal \subseteq 2^{\omega}$ such that
\begin{itemize}
	\item[(i)] $(\forall X \in \Scal)(\forall Y \leq_T X)[Y \in \Scal]$
	\item[(ii)] $(\forall X, Y \in \Scal)[X \oplus Y \in \Scal]$
	\item[(iii)] Every infinite, binary tree in $\Scal$ has an infinite path in~$\Scal$.
\end{itemize}
\end{definition}

\begin{theorem}\label{thm:rt22-not-er-fairness}
Let~$A_0, A_1 \subseteq \Qb$ and~$\Scal$ be a Scott set whose members are all fair for~$A_0, A_1$.
For every set~$C \in \Scal$, every~$C$-computable coloring~$f : [\omega]^2 \to k$,
there is an infinite $f$-homogeneous set~$H$ such that~$H \oplus C$ computes neither
an infinite subset of~$A_0$, nor a dense subset of~$A_1$.
\end{theorem}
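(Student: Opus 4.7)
The plan is to construct $H$ via a Mathias-style forcing internal to the Scott set $\Scal$, following the standard pattern for proving preservation theorems for $\rt^2_k$ in the spirit of Cholak--Jockusch--Slaman and Lerman--Solomon--Towsner. Conditions are tuples $(F, X, i)$ where $F \subset \omega$ is finite and $f$-homogeneous of some color $i < k$, $X \in \Scal$ is an infinite reservoir with $F < X$, and $f(x, y) = i$ for all $x \in F$ and $y \in X$. A filter-generic yields an infinite $f$-homogeneous set $H$. Initial conditions exist and the reservoir can always be shrunk within $\Scal$: since $\Scal$ is a Scott set containing $C$, relativized weak K\"onig's lemma allows us to find, inside any given infinite $X \in \Scal$, a \emph{min-pre-homogeneous} subset (one that supports extensions by any of its elements without losing infiniteness for color $i$) still lying in $\Scal$. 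By Lemma~\ref{lem:er-fair-not-solution}, it suffices to guarantee that $H \oplus C$ is fair for $A_0, A_1$.

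Accordingly, for each index $e$ and each matrix $M$, I would show density of the conditions forcing one of: (a) the $M$-formula defined by $\Phi_e^{G \oplus C}$ is not essential, or (b) some $M$-valuation $(\vec{R}, \vec{S})$ diagonalizing against $A_0, A_1$ satisfies $\Phi_e^{G \oplus C}(\vec{R}, \vec{S})$. Given a condition $(F, X, i)$, define the auxiliary $M$-formula $\tilde{\varphi}(\vec{R}, \vec{S})$ which holds iff there is a finite $E \subseteq X$ with $F \cup E$ being $f$-homogeneous of color $i$ and $\Phi_e^{(F \cup E) \oplus C}(\vec{R}, \vec{S})\!\downarrow$. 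The formula $\tilde{\varphi}$ is $\Sigma^{0, X \oplus C}_1$, and since $\Scal$ is closed under join, $X \oplus C \in \Scal$ and is therefore fair for $A_0, A_1$.

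Case (a): if $\tilde{\varphi}$ is not essential, there is a bound $s$ beyond which no valuation witnesses it. Because any Mathias extension of $(F, X, i)$ has $G \setminus F \subseteq X$, the formula $\Phi_e^{G \oplus C}$ is also bounded by $s$ under any such extension, so $(F, X, i)$ (together with any trivial one-point extension ensuring progress of the filter) already forces case (a). Case (b): if $\tilde{\varphi}$ is essential, then by fairness of $X \oplus C$ some $M$-valuation $(\vec{R}, \vec{S})$ diagonalizing against $A_0, A_1$ satisfies $\tilde{\varphi}(\vec{R}, \vec{S})$; unfolding the definition yields a finite $E \subseteq X$ with $F \cup E$ $f$-homogeneous and $\Phi_e^{(F \cup E) \oplus C}(\vec{R}, \vec{S})\!\downarrow$, and replacing the reservoir by $X' = \{z \in X : z > \max(E) \wedge (\forall y \in E)\, f(y, z) = i\}$, which is infinite and in $\Scal$ by the min-pre-homogeneity invariant, produces the desired extension $(F \cup E, X', i)$ forcing case (b). The principal technical obstacle is maintaining, along the whole construction, a reservoir rich enough to absorb arbitrary finite $f$-homogeneous extensions coming from witnesses of $\tilde{\varphi}$ while remaining inside $\Scal$; this is precisely what the min-pre-homogeneity invariant, preserved by $\wkl$ inside the Scott set, secures.
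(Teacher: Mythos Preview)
Your proposal has a genuine gap in the ``min-pre-homogeneity'' invariant. You commit to a single color $i$ and claim that weak K\"onig's lemma inside $\Scal$ yields, below any infinite $X \in \Scal$, an infinite $X' \in \Scal$ such that $\{y \in X' : f(z,y) = i\}$ is infinite for every $z \in X'$. But no such $X'$ need exist for a fixed color: if $f$ is constant with value $0$ and $i = 1$, the set is empty. More generally, which colors admit such reservoirs is $\Pi^{0,X\oplus C}_2$ information, unavailable inside $\Scal$, so $i$ cannot be chosen in advance. Hence in your case~(b) the set $X' = \{z \in X : z > \max(E) \wedge (\forall y \in E)\, f(y,z) = i\}$ may be finite, and the extension fails; likewise there is no guarantee that $F$ can ever be enlarged, so the generic $H$ may be finite.

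The paper sidesteps this by building $k$ sets in parallel with conditions $(F_0,\dots,F_{k-1},X)$ and forcing only the \emph{disjunctive} requirements $\Qcal_{\vec{e}} = \Rcal_{e_0}^{G_0} \vee \dots \vee \Rcal_{e_{k-1}}^{G_{k-1}}$ (where $\Rcal_e^G$ says $W_e^{G\oplus C}$ is not an $\erps$-solution), rather than attempting to make any single $G_i \oplus C$ fair. Progress on each side is secured by first reducing, via induction on $k$, to the case where $\Scal$ contains no infinite $f$-thin set. The key forcing lemma applies fairness of $X \oplus C$ to an $M$-formula that universally quantifies over $k$-partitions of the reservoir; in the non-essential case the resulting $\Pi^{0,X\oplus C}_1$ classes are nonempty and $\wkl$ in $\Scal$ supplies the needed partitions. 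Your simpler existential $\tilde{\varphi}$ does not carry this partition machinery, which is precisely what replaces the single-color commitment you cannot make.
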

\begin{proof}
The proof is by induction over the number of colors~$k$.
The case~$k = 1$ is ensured by Lemma~\ref{lem:er-fair-not-solution}.
Fix a set~$C \in \Scal$ and let $f : [\omega]^2 \to k$ be a $C$-computable coloring.
If~$f$ has an infinite $f$-thin set $H \in \Scal$, that is, an infinite set over which $f$
avoids at least one color, then $H \oplus C$
computes a coloring~$g : [\omega]^2 \to k-1$ such that every infinite $g$-homogeneneous
set computes relative to~$H \oplus C$ an infinite $f$-homogeneous set. Since~$H \oplus C \in \Scal$,
by induction hypothesis, there is an infinite $g$-homogeneous set~$H_1$
such that~$H_1 \oplus H \oplus C$ computes neither an infinite subset of~$A_0$, nor a dense subset of~$A_1$.
So suppose that $f$ has no infinite $f$-thin set in~$\Scal$.

We construct $k$ infinite sets~$G_0, \dots, G_{k-1}$.
We need therefore to satisfy the following requirements for each~$p \in \omega$.
\[
  \Ncal_p : \hspace{20pt} (\exists q_0 > p)[q_0 \in G_0] 
		\hspace{20pt} \wedge \dots \wedge \hspace{20pt} 
	(\exists q_{k-1} > p)[q_{k-1} \in G_{k-1}] 
\]
Furthermore, we want to ensure that one of the~$G$'s computes
neither an infinite subset of~$A_0$, nor a dense subset of~$A_1$.
To do this, we will satisfy the following requirements
for every $k$-tuple of integers $e_0, \dots, e_{k-1}$.
\[
  \Qcal_{\vec{e}} : \hspace{20pt} 
		\Rcal_{e_0}^{G_0} \hspace{20pt} \vee \dots \vee \hspace{20pt} \Rcal_{e_{k-1}}^{G_{k-1}}
\]
where $\Rcal_e^H$ holds if $W^{H \oplus C}_e$ is neither an infinite subset of~$A_0$, nor a dense subset of~$A_1$.

We construct our sets $G_0, \dots, G_{k-1}$ by forcing. 
Our conditions are variants of Mathias conditions~$(F_0, \dots, F_{k-1}, X)$
such that each~$X$ is an infinite set in~$\Scal$, each $F_i$ is a finite set with $\max(F_i)<\min(X)$, and the following property holds:
\begin{itemize}
	\item[(P)] $(\forall i < k)(\forall x \in X)[F_i \cup \{x\} \mbox{ is } f\mbox{-homogeneous with color } i]$
\end{itemize}
A condition~$d = (E_0, \dots, E_{k-1}, Y)$ \emph{extends} $c = (F_0, \dots, F_{k-1}, X)$
if $(E_i, Y)$ Mathias extends $(F_i, X)$ for every~$i < k$.
We now prove the progress lemma, stating that we can force the~$G$'s to be infinite.
This is where we use the fact that there is no infinite $f$-thin set in~$\Scal$.

\begin{lemma}\label{lem:rt12-er-fairness-progress}
For every condition~$c = (F_0, \dots, F_{k-1}, X)$, every $i < k$ and every~$p \in \omega$
there is some extension~$d = (E_0, \dots, E_{k-1}, Y)$ such that~$E_i \cap (p,+\infty)_\Nb \neq \emptyset$.
\end{lemma}
\begin{proof}
Fix~$c$, $i$ and~$p$. If for every~$x \in X \cap (p,+\infty)_\Nb$
and almost every~$y \in X$, $f(x,y) \neq i$, then $X$ computes an infinite
$f$-thin set, contradicting our hypothesis. Therefore, there is some~$x \in X \cap (p,+\infty)_\Nb$
such that~$f(x, y) = i$ for infinitely many~$y \in X$.
Let~$Y$ be the collection of such $y$'s. The condition~$(F_0, \dots, F_{i-1}, F \cup \{x\}, F_{i+1}, \dots, F_k, Y)$
is the desired extension.
\end{proof}

We now prove the core lemma stating that we can satisfy each $\Qcal$-requirement.
A condition~$c$ \emph{forces} a requirement~$\Qcal$
if $\Qcal$  holds for every set~$G$ satisfying~$c$.

\begin{lemma}\label{lem:rt12-er-fairness-forcing}
For every condition~$c = (F_0, \dots, F_{k-1}, X)$ and every $k$-tuple of indices~$\vec{e}$,
there is an extension~$d = (E_0, \dots, E_{k-1}, Y)$ forcing~$\Qcal_{\vec{e}}$.
\end{lemma}
\begin{proof}
We can assume that~$W^{F_i \oplus C}_{e_i}$ has already outputted at least $k$ elements
and is either included in~$A_0$ or in~$A_1$ for each~$i < k$.
Indeed, if~$c$ has no extension satisfying this condition, then
$c$ forces $W^{G_i \oplus C}_{e_i}$ to be finite or not to be a valid solution for 
some~$i< k$ 
and therefore forces~$\Qcal_{\vec{e}}$. For each~$i < k$, we associate the label~$\ell_i < 2$ and the number~$p_i$
such that~$W^{F_i \oplus C}_{e_i}$ is the~$(p_i+1)$th set of this form included in~$A_{\ell_i}$.

Let~$n$ be the number of sets~$W^{F_i \oplus C}_{e_i}$ which are included in~$A_0$,
and let~$M$ be the~$(k-n)$-by-$n$ matrix such that the~$j$th row is composed
of the~$n$ first elements already outputted by the set $W^{F_i \oplus C}_{e_i}$
where~$p_i = j$ and~$\ell_i = 1$. In other words, $M(j)$ are the $n$ first elements 
outputted by the $j$th set $W^{F_i \oplus C}_{e_i}$ included in~$A_1$.

Let~$\varphi(\vec{U}, \vec{V})$ be the $\Sigma^{0,X \oplus C}_1$ formula
which holds if for every $k$-partition $Z_0 \cup \dots \cup Z_{k-1} = X$,
there are some~$i < k$ and some finite set~$E \subseteq Z_i$
which is $f$-homogeneous with color~$i$ and such that either~$\ell_i = 0$
and $W^{(F_i \cup E) \oplus C}_{e_i} \cap U_{p_i} \neq \emptyset$,
or $\ell_i = 1$ and~$W^{(F_i \cup E) \oplus C}_{e_i} \cap V_{p_i, I} \neq \emptyset$ for each~$I \in \inter_\Qb(M(p_i))$.
We have two cases.

In the first case, $\varphi(\vec{U}, \vec{V})$ is essential.
Since $X \oplus C$ is fair for~$A_0, A_1$, there is 
an $M$-valuation~$(\vec{R}, \vec{S})$ diagonalizing against~$A_0, A_1$
such that~$\varphi(\vec{R}, \vec{S})$ holds.
By compactness and definition of diagonalization against~$A_0, A_1$, 
there is a finite subset~$D \subset X$ such that
for every $k$-partition $D_0 \cup \dots \cup D_{k-1} = D$,
there are some $i < k$ and some finite set~$E \subseteq D_i$
which is $f$-homogeneous with color~$i$ and such that either~$\ell_i = 0$
and $W^{(F_i \cup E) \oplus C}_{e_i} \cap A_1 \neq \emptyset$,
or $\ell_i = 1$ and~$W^{(F_i \cup E) \oplus C}_{e_i} \cap A_0 \neq \emptyset$.

Each~$y \in X \setminus D$ induces a $k$-partition~$D_0 \cup \dots \cup D_{k-1}$ of~$D$
by setting~$D_i = \{ x \in D : f(x, y) = i \}$. Since there are finitely many possible 
$k$-partitions of~$D$, there are a $k$-partition~$D_0 \cup \dots \cup D_{k-1} = D$
and an infinite $X$-computable set~$Y \subseteq X$ such that
\[
(\forall i < k)(\forall x \in D_i)(\forall y \in Y)[f(x,y) = i]
\]
We furthermore assume that~$min(Y)$ is larger than the use of the computations.
Let~$i < k$ and~$E \subseteq D_i$ be the~$f$-homogeneous set with color~$i$
such that either~$\ell_i = 0$ and $W^{(F_i \cup E) \oplus C}_{e_i} \cap A_1 \neq \emptyset$,
or $\ell_i = 1$ and~$W^{(F_i \cup E) \oplus C}_{e_i} \cap A_0 \neq \emptyset$.
The condition~$(F_0, \dots, F_{i-1}, F_i \cup E, F_{i+1}, \dots, F_{k-1}, Y)$
is an extension of~$c$ forcing $\Qcal_{\vec{e}}$ by the $i$th side.

In the second case, there is some threshold~$s \in \omega$
such that for every~$M$-type~$\vec{T}$, there is no $M$-valuation~$(\vec{R}, \vec{S}) > s$
of type~$\vec{T}$ such that~$\varphi(\vec{R}, \vec{S})$ holds.
By compactness, it follows that 
for every $M$-type~$\vec{T}$, the $\Pi^{0,X \oplus C}_1$ class~$\Ccal_{\vec{T}}$
of all $k$-partitions $Z_0 \cup \dots \cup Z_{k-1} = X$
such that for every~$i < k$ and every finite set~$E \subseteq Z_i$
which is $f$-homogeneous with color~$i$, either~$\ell_i = 0$
and $W^{(F_i \cup E) \oplus C}_{e_i} \cap T_{p_i} \cap (s,+\infty)_\Nb = \emptyset$,
or $\ell_i = 1$ and~$W^{(F_i \cup E) \oplus C}_{e_i} \cap I \cap (s,+\infty)_\Nb = \emptyset$
for some~$I \in \inter_\Qb(M(p_i))$ is non-empty.
Since $\Scal$ is a Scott set, for each~$M$-type $\vec{T}$, there is a 
$k$-partition~$\vec{Z}^{\vec{T}}\in \Ccal_{\vec{T}}$
such that~$\bigoplus_{\vec{T}} \vec{Z}^{\vec{T}} \oplus X \oplus C \in \Scal$.

If there are some~$M$-type~$\vec{T}$ and some~$i < k$ such that $\ell_i = 1$ 
and~$Z^{\vec{T}}_i$ is infinite, then the condition~$(F_0, \dots, F_{k-1}, Z^{\vec{T}}_i)$
extends $X$ and forces~$W^{G_i \oplus C}_{e_i}$ not to be dense. 
So suppose that it is not the case. Let~$Y \in \Scal$
be an infinite subset of~$X$ such that for each~$M$-type~$\vec{T}$,
there is some~$i < k$ such that~$Y \subseteq Z_i^{\vec{T}}$. 
Note that by the previous assumption, $\ell_i = 0$ for every such~$i$.
We claim that the condition~$(F_0, \dots, F_{k-1}, Y)$ forces $W^{G_i \oplus C}_{e_i}$ to be finite
for some~$i < k$ such that~$\ell_i = 0$. Suppose for the sake of contradiction that there are some
rationals~$x_0, \dots, x_{n-1} > s$ such that $x_{p_i} \in W^{G_i \oplus C}_{e_i}$ for each~$i < k$ where~$\ell_i = 0$.
Since~$x_0, \dots, x_{n-1} > s$, $x_0, \dots, x_{n-1} \in \bigcup \inter_\Qb(M)$. 
Therefore, by Lemma~\ref{lem:tuple-has-mtype}, let~$\vec{T}$ be the unique $M$-type
such that~$x_j \in T_j$ for each~$j < n$. By assumption, there is some~$i < k$
such that~$Y \subseteq Z^{\vec{T}}_i$ and~$\ell_i = 0$. By definition of~$Z^{\vec{T}}_i$,
$W^{G_i \oplus C}_{e_i} \cap T_{p_i} \cap (s,+\infty)_\Nb = \emptyset$,
contradicting~$x_{p_i} \in W^{G_i \oplus C}_{e_i}$.
\end{proof}

Using Lemma~\ref{lem:rt12-er-fairness-progress} and Lemma~\ref{lem:rt12-er-fairness-forcing}, define an infinite descending sequence 
of conditions~$c_0 = (\emptyset, \dots, \emptyset, \omega) \geq c_1 \geq \dots$
such that for each~$s \in \omega$
\begin{itemize}
	\item[(i)] $|F_{i,s}| \geq s$ for each~$i < k$
	\item[(ii)] $c_{s+1}$ forces~$\Qcal_{\vec{e}}$ if~$s = \tuple{e_0, \dots, e_{k-1}}$
\end{itemize}
where~$c_s = (F_{0,s}, \dots, F_{k-1,s}, X_s)$. Let~$G_i = \bigcup_s F_{i,s}$ for each~$i < k$.
The~$G$'s are all infinite by (i) and $G_i$ does not compute an $\erps$-solution to the~$A$'s for some~$i < k$ by (ii).
This finishes the proof of Theorem~\ref{thm:rt22-not-er-fairness}.
\end{proof}

We are now ready to prove the main theorem.

\begin{proof}[Proof of Theorem~\ref{thm:er22-not-computably-reduces}]
By the low basis theorem~\cite{Jockusch197201}, there is a low set $P$ of PA degree.
By Scott~\cite{Scott1962Algebras}, every PA degree bounds a Scott set. 
Let~$\Scal$ be a Scott set such that~$X \leq_T P$ for every~$X \in \Scal$.
By Lemma~\ref{lem:partition-emptyset-er-fair}, there is a $\Delta^{0,P}_2$ (hence $\Delta^0_2$)
partition $A_0 \cup A_1 = \Qb$ such that $P$ is fair for~$A_0, A_1$. In particular,
every set~$X \in \Scal$ is fair for~$A_0, A_1$ since fairness is downward-closed under the Turing reducibility.

By Schoenfield's limit lemma~\cite{Shoenfield1959degrees}, there is a computable function $h : [\Qb]^2 \to 2$ such 
that for each~$x \in \Qb$, $\lim_s h(x, s)$ exists and $x \in A_{\lim_s h(x, s)}$.
Note that for every infinite set $D$ 0-homogeneous for~$h$, $D \subseteq A_0$,
and for every dense set $D$ 1-homogeneous for~$h$, $D \subseteq A_1$.

Fix a computable $\rt^2$-instance~$f : [\omega]^2 \to k$. In particular, $f \in 
\Scal$.
By Theorem~\ref{thm:rt22-not-er-fairness}, there is an infinite $f$-homogeneous set~$H$
such that $H$ computes neither an infinite subset of~$A_0$, nor a dense subset of~$A_1$.
Therefore, $H$ computes no $\erp$-solution to~$h$.
\end{proof}

\section{Discussion and questions}\label{sect:questions}

This Erd\H{o}s-Rado theorem shares an essential feature with
another strengthening of Ramsey's theorem for pairs already studied in reverse mathematics:
the tree theorem for pairs~\cite{Chubb2009Reverse, Corduan2010Reverse, Dzhafarov2010Ramseys, Patey2015strength}.

\begin{definition}[Tree theorem]
We denote by $[2^{<\Nb}]^n$ the collection of \emph{linearly ordered} subsets of~$2^{<\Nb}$ of size~$n$.
A set $S \subseteq 2^{<\Nb}$ is \emph{order isomorphic} to $2^{<\Nb}$ (written $S \cong 2^{<\Nb}$) 
if there is a bijection $g : 2^{<\Nb} \to S$ such that for all $\sigma, \tau \in 2^{<\Nb}$,
$\sigma \preceq \tau$ if and only if $g(\sigma) \preceq g(\tau)$.
Given a coloring $f : [2^{<\Nb}]^n \to k$, a tree $S$ is $f$-homogeneous if $S \cong 2^{<\Nb}$
and $f \uh [S]^n$ is monochromatic.
$\tto^n_k$ is the statement ``Every coloring $f : [2^{<\Nb}]^n \to k$ has an $f$-homogeneous tree.''
\end{definition}

Both~$\tto^2_2$ and~$\erp$ lie between the arithmetic comprehension axiom and~$\rt^2_2$,
but more than that, they share a \emph{disjoint extension commitment}.
Let us try to explain this informal notion with a case analysis.

Suppose we want to construct a computable $\rt^1_2$-instance~$f : \Nb \to 2$ which diagonalizes
against two opponents~$W^f_0$ and~$W^f_1$. After some finite amount of time, 
each opponent~$W^f_i$ will have outputted
a finite approximation of a solution to~$f$, that is, a 
finite $f$-homogeneous set~$F_i$.
The two opponents share a common strategy. $W^f_0$ tries to build an infinite $f$-homogeneous set~$H_0$
for color~0, and~$W^f_1$ tries to build an infinite $f$-homogeneous set~$H_1$ for color~1.
It is therefore difficult to defeat both opponents at the same time, since
if from now on we set~$f(x) = 1$, $W^f_1$ will succeed in extending~$F_1$ to an infinite $f$-homogenenous set,
and if we always set $f(x) = 0$, $W^f_0$ will succeed with its dual strategy.

Consider now the same situation, where we want to construct a computable~$\tto^1_2$-instance $f : 2^{<\Nb} \to 2$.
After some time, the opponent~$W^f_0$ will have outputted a finite tree~$S_0 \cong 2^{<b}$ which is
$f$-homogeneous for color~$0$, and the opponent~$W^f_1$ will have done the same with a finite tree~$S_1 \cong 2^{<b}$
$f$-homogeneous for color~$1$.
The main difference with the~$\rt^1_2$ case is that each opponent will \emph{commit to extend}
each leaf of his finite tree~$S_i$ into an infinite tree isomorphic to~$2^{<\Nb}$.
In particular, for each tree~$S_i$, the sets~$X_\sigma$ of nodes extending the leaf~$\sigma \in S_i$
are \emph{pairwise disjoint}. Therefore, each opponent commits to extend its partial solution
to disjoint sets. Moreover, by asking~$b$ to be large enough, each opponent will commit
to extend enough pairwise disjoint sets so that we can choose two of them for each opponent
and operate the diagonalization without any conflict.

This combinatorial property works in the same way for~$\ers$-instances.
Indeed, in this case, each opponent will commit to extend its partial solution
to pairwise disjoint intervals due to the density requirement of an~$\ers$-solution.
Since the combinatorial arguments of the Erd\H{o}s-Rado theorem and the tree theorem for pairs
are very similar, one may wonder whether they are equivalent in reverse mathematics.

\begin{question}
How do $\erp$ and~$\tto^2_2$ compare over~$\rca$?
\end{question}

The failure of Seetapun's argument for~$\erp$ comes 
from this disjoint extension commitment feature. 
In particular, it is hard to find a forcing notion
for~$\erp$ whose conditions are extendible.

\begin{question}
Does~$\erp$ imply~$\aca$ over~$\rca$?
\end{question}

$\ers$ and~$\tto^1$ have the same state of the art due to their common
combinatorial flavor. However, when looking at their statements for pairs,
$\erp$ and~$\tto^2_2$ have a fundamental difference: $\erp$ has only
a half disjoint extension commitment feature. This weaker property
prevents one from separating~$\rt^2_2$ from~$\erp$ over~$\rca$
by adapting the argument of~$\tto^2_2$ in~\cite{Patey2015strength}.

\begin{question}
Does~$\rt^2_2$ imply $\erp$ over~$\rca$?
\end{question}

We have seen in section~\ref{sect:pigeonhole} that the separation of~$\bst$ from~$\ers$
is directly adaptable from the separation of~$\bst$ from~$\tto^1$
from Corduan, Groszek, and Mileti \cite{Corduan2010Reverse}, since
the combinatorial core of this separation comes from this
shared disjoint extension commitment. It is natural to conjecture
that the status of~$\ers$ with respect to~$\ist$
will be the same as $\tto^1$.

\begin{question}
Does~$\ers$ imply~$\ist$ over~$\rca$?
\end{question}

It is worth mentioning that $\rca+\ist$ proves a
strengthening of both $\tto^1$ and $\ers$, namely the statement ``For every $n$ and 
every $f\colon2^{<\N}\to n$ there exists a strong copy $S$ of the full binary tree such 
that $f$ is constant on $S$'', where by \emph{strong copy} we mean an isomorphic copy of 
$2^{<\N}$ with respect to order and minima. It is easy to see that a strong 
copy computes a dense set of $2^{<\Nb}$, when $2^{<\N}$ is equipped with the standard 
dense linear ordering on binary strings, i.e., the only linear order such that
$\{\tau\colon \tau \succeq 
\sigma\conc0\}<_\Qb\sigma<_\Qb\{\tau\colon\tau\succeq\sigma\conc1\}$ for all 
$\sigma\in2^{<\Nb}$. It is likely that if we can separate  $\tto^1$ or $\ers$ from 
$\ist$, then we can already separate this stronger statement by essentially the same 
proof.

\bibliographystyle{plain}
\bibliography{bibliography}

\end{document}